%
%
%
%
%
\documentclass{svjour3_opte}                     

\smartqed  
\newcommand{\dom}{\text{dom}}
\newcommand{\epi}{\text{epi}}

\newcommand{\N}{\mathbb{N}}
\newcommand{\R}{\mathbb{R}}

\newcommand{\InDCAe}{InDCA$_\text{e}$}
\newcommand{\InDCAine}{InDCA$_\text{n}$}
\newcommand{\RInDCAe}{RInDCA$_\text{e}$}
\newcommand{\RInDCAine}{RInDCA$_\text{n}$}
\newcommand{\vx}{\textbf{x}}
\newcommand{\vy}{\textbf{y}}
\newcommand{\vz}{\textbf{z}}
\newcommand{\vw}{\textbf{w}}
\newcommand{\zero}{\textbf{0}}
\newcommand{\bX}{\textbf{X}}
\newcommand{\orcid}[1]{\href{https://orcid.org/#1}{\includegraphics[scale=1]{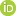}}}
\newcommand{\redtext}[1]{#1}

\usepackage{multirow}
\usepackage{subfigure}
\usepackage{graphicx}
\usepackage{amsmath,amsfonts}
\usepackage{amssymb}
\usepackage{cases}
\usepackage{multirow}
\usepackage{adjustbox}
\usepackage[graphicx]{realboxes}
\usepackage[figuresright]{rotating}
\usepackage{lscape}
\usepackage[colorlinks,linkcolor=blue,anchorcolor=blue,citecolor=blue,urlcolor=blue]{hyperref}
\usepackage[ruled,linesnumbered]{algorithm2e}
\usepackage{listings}
\lstset{language=Matlab}
\lstset{breaklines}
\lstset{extendedchars=false}

\graphicspath{{figs/}}

\DeclareMathOperator*{\argmin}{argmin}
\DeclareMathOperator*{\argmax}{argmax}

\newtheorem{assumption}{Assumption}

\begin{document}
\title{A Refined Inertial DC Algorithm for DC Programming \thanks{This work is supported by the National Natural Science Foundation of China (Grant 11601327).}
}

\author{Yu You \and Yi-Shuai Niu \orcid{0000-0002-9993-3681}}
\authorrunning{Yu You \and Yi-Shuai Niu} 

\institute{
           Yu You \at
           School of Mathematical Sciences, Shanghai Jiao Tong University, China \\
           \email{youyu0828@sjtu.edu.cn}
            \and
           Yi-Shuai Niu \at
            Department of Applied Mathematics, The Hong Kong Polytechnic University, Hong Kong \\
            School of Mathematical Sciences \& SJTU-Paristech, Shanghai Jiao Tong University, China \\
            \email{yi-shuai.niu@polyu.edu.hk, niuyishuai@sjtu.edu.cn}
}

\date{}

\maketitle

\begin{abstract}
In this paper we consider the  difference-of-convex (DC) programming problems, whose objective function is the difference of two convex functions. The classical DC Algorithm (DCA) is well-known for solving this kind of problems, which generally returns a critical point. Recently, an inertial DC algorithm (InDCA) equipped with heavy-ball inertial-force procedure was proposed in de Oliveira et al. (Set-Valued and Variational Analysis
27(4):895--919, 2019), which potentially helps to improve both the convergence speed and the solution quality. Based on InDCA, we propose a refined inertial DC algorithm (RInDCA) equipped with enlarged inertial step-size compared with InDCA. Empirically, larger step-size accelerates the convergence. We demonstrate the subsequential convergence of our refined version to a critical point. In addition, by assuming the Kurdyka-Łojasiewicz (KL) property of the objective function, we establish the sequential convergence of RInDCA. Numerical simulations on checking copositivity of matrices and image denoising problem show the benefit of larger step-size. 
\keywords{Difference-of-convex programming \and Refined inertial DC algorithm \and Kurdyka-Łojasiewicz property \and Checking copositivity of matrices \and Image denoising}
\subclass{90C26 \and 90C30 \and 68U10}
\end{abstract}

\section{Introduction}
\label{intro}
Difference-of-convex (DC) programming, referring to the problems of minimizing a function which is the difference of two convex functions, forms an important class of nonconvex programming and has been studied extensively for decades, e.g., \redtext{ \cite{hiriart1985generalized,Lethi_2005,DCA30,Pham_dca_theory,Pham_trs,phamdinh2014,souza2016global,joki2017proximal,pang2017computing,lu2019enhanced,yin2015minimization}}. 

In the paper, we consider the standard DC program in form of
\begin{equation*}
\begin{split}
\text{min} &\{f(\textbf{x}) := f_1(\textbf{x}) - f_2(\textbf{x}):\textbf{x}\in\R^n\}, \\
\end{split}
\label{P}
\tag{$P$}
\end{equation*}
where $f_1$ and $f_2$ are proper closed and convex functions. Such a function $f$ is called a DC function, while $f_1-f_2$ is a DC decomposition of $f$ with $f_1$ and $f_2$ being DC components. Formulation \eqref{P} also includes convex constrained problems $\min\{\widetilde{f}_1(\textbf{x})-f_2(\textbf{x}):\textbf{x}\in X\}$, where $\widetilde{f}_1:\R^n\rightarrow\R$ is convex and $X\subseteq \R^n$ is nonempty, closed and convex. By introducing the indicator function of $X$, i.e., $\delta_X(\textbf{x}) = 0$ if $\textbf{x}\in X$ and $\delta_X(\textbf{x}) = \infty$ otherwise, this model recovers \eqref{P} as $\min\{(\widetilde{f}_1+\delta_{X})(\textbf{x})-f_2(\textbf{x}):\textbf{x}\in \R^n\}$. Throughout the manuscript, we make the next assumptions for problem (\ref{P}).
\begin{assumption}
	\label{intro:assump1}
	\begin{itemize}
		\item[(a)] $\dom (f_1) \subseteq Y \subseteq \text{int}(\dom(f_2))$, where $Y\subseteq\R^n$ is closed. 
		\item[(b)]  The function $f$ is bounded below, i.e., there exists a scalar $f^*$ such that {\normalfont $f(\textbf{x}) \geq f^*$} for all {\normalfont $\textbf{x}\in\R^n$}.
	\end{itemize}
\end{assumption}
The term (a) of Assumption \ref{intro:assump1} guarantees the nonemptiness of the subdifferential of $f_2$ at any point of $\dom (f_1)$. It will be also used in Theorem \ref{them:1}. 

The DC Algorithm (DCA) \cite{Lethi_2005,DCA30,Pham_dca_theory,Pham_trs} is renowned for solving \eqref{P}, which has been introduced by Pham Dinh Tao in 1985 and extensively developed by Le Thi Hoai An and Pham Dinh Tao since 1994. Specifically, at iteration $k$, DCA obtains the next iterate $\textbf{x}^{k+1}$ by solving the convex subproblem
\begin{equation*}
\textbf{x}^{k+1}\in \argmin\{f_1(\textbf{x})-\langle \textbf{y}^k, \textbf{x}\rangle:\textbf{x}\in\R^n\}, 
\end{equation*}
where $ \textbf{y}^k \in\partial f_2(\textbf{x}^k)$. It has been proved in \cite{Pham_dca_theory} that any limit point $\bar{\textbf{x}}$ of the generated sequence $\{\textbf{x}^k\}$ is a (DC) critical point of problem \eqref{P}, i.e., the necessary local optimality condition $\partial f_1(\bar{\textbf{x}})\cap \partial f_2(\bar{\textbf{x}}) \neq \emptyset$ is verified.

Recently, several accelerated algorithms for DC programming have been studied. Actacho et al. proposed
in \cite{BDCA_S,BDCA_NS} the boosted DC algorithms for unconstrained smooth/nonsmooth DC program, where the direction $\vx^{k+1}-\vx^k$, determined by the consecutive iterates of DCA, is verified as a descent direction of $f$ at $\textbf{x}^{k+1}$ when $f_2$ is strongly convex, thus a line-search procedure can be conducted along it to obtain a better candidate with lower objective value. Next, Niu et al. \cite{niu-port} developed a boosted DCA by incorporating the line-search procedure for DC program (both smooth and nonsmooth) with convex constraints. Later, Actacho et al. \cite{BDCA_NS_C} investigated the line-search idea in DC program with linear constraint, where a sufficient and necessary condition for determining the feasibility of  the direction $\vx^{k+1}-\vx^k$ is proposed. Different from the boosted approaches above, another type of acceleration is the momentum methods. There are two renowned momentum methods: the Polyak's heavy-ball method \cite{polyak1987introduction,zavriev1993heavy} and the Nesterov's acceleration technique \cite{nesterov1983method,nesterov2013gradient}. Indeed, Nesterov's acceleration belongs to the heavy-ball family. These two momentum methods have been successfully applied for various nonconvex optimizations problems (see, e.g., \cite{adly2021first,ochs2014ipiano,pock2016inertial,mukkamala2020convex}). In \cite{de2019inertial}, de Oliveira et al. proposed an inertial DC algorithm (InDCA) for problem \eqref{P} in the premise that $f_2$ is strongly convex, where the heavy-ball inertial-force procedure is introduced. \redtext{Phan et al. developed in \cite{nhat2018accelerated} an accelerated variant of DCA for \eqref{P} by incorporated the Nesterov's acceleration technique.} At the same time, Wen et al. \cite{wen2018proximal} used the same  technique for a special class of DC program, where $f_1$ is the sum of a smooth convex function and a (possibly non-smooth) convex function. Adly et al. \cite{adly2021first} proposed a gradient-based method, using both heavy-ball and Nesterov's inertial-forces, for optimizing a smooth function. They apply the algorithm for \eqref{P} by minimizing the DC Moreau envelope ${}^{\mu} f_1-{}^{\mu} f_2$\footnote{Let $\mu>0$ and $i=0,1$, ${}^{\mu}f_i$ is the Moreau envelope of $f_i$ with parameter $\mu$, defined as ${}^{\mu} f_i(\textbf{x}):= \min\{f_i(\textbf{y})+\frac{1}{2{\mu}}\Vert\textbf{x}-\textbf{y} \Vert^2:\textbf{y}\in\R^n\}.$}, which can return a critical point of $f_1-f_2$. However, their methods require computing the proximal operators of $f_1$ and $f_2$. The above inertial-type methods often help to improve the performances (compared with their counterparts without inertial-forces) in both the convergence speed and the solution quality. Concerning the optimality condition of the DC program \eqref{P}, the aforementioned methods only guarantee the critical point, which is weaker than the $d$-stationary point (see, e.g., \cite{pang2017computing,de2020abc}).  Next, we introduce some works about obtaining the $d$-stationary solution of \eqref{P}. By exploring the structure of $f_2$, i.e., $f_2$ is the supremum of finitely many convex smooth functions, Pang et al. \cite{pang2017computing} proposed a novel enhanced DC algorithm (EDCA) to obtain a $d$-stationary solution. Later, Lu et al. \cite{lu2019enhanced} developed an inexact variant of EDCA in order to make EDCA cheaper at each iteration. Furthermore, considering additionally the same structure of $f_1$ as that in \cite{wen2018proximal}, a proximal version of EDCA was developed, whose cost in each iteration is lower than the inexact version, besides, this algorithm incorporated the Nesterov's extrapolation for possible acceleration.

In this paper, based on the inertial DC algorithm (InDCA) \cite{de2019inertial}, we propose a refined version (RInDCA) equipped with larger inertial step-size. Indeed, both InDCA and RInDCA can handle exactness and inexactness in the solution of the convex subproblems. The basic idea of the exact versions of InDCA and RInDCA (denoted by \InDCAe\ and \RInDCAe) are described as follows. \RInDCAe\ obtains at the $k$-th iteration the next iterate $\textbf{x}^{k+1}$ by solving the convex subproblem
$$\textbf{x}^{k+1} \in \argmin \{f_1(\textbf{x}) - \langle \textbf{y}^k + \gamma(\textbf{x}^k-\textbf{x}^{k-1}),\textbf{x}\rangle:\textbf{x}\in\R^n\},$$
where $\textbf{y}^k \in \partial f_2(\textbf{x}^k)$.  
Our analysis shows that the inertial step-size $\gamma \in [0, (\sigma_1+\sigma_2)/2)$ is adequate for the convergence of \RInDCAe\ compared with $[0,\sigma_2/2)$ in \InDCAe, where $\sigma_1$ and $\sigma_2$ are the strong convex parameters of $f_1$ and $f_2$. Thus, the inertial range of \RInDCAe\ is larger than that of \InDCAe\ when $\sigma_1, \sigma_2 >0$. Empirically, larger step-size accelerates the convergence. In some applications, we will encounter the case where $f_1$ is strongly convex and $f_2$ is not, then \RInDCAe\ is applicable but not \InDCAe. Later, the details about the exact and inexact versions can be found in Section \ref{sec:3}. The benefit of larger inertial step-size is discussed in Section \ref{sec:6}.

Our \textit{contributions} are: (1) propose a refined inertial DC algorithm, which is equipped with larger inertial step-size compared with InDCA \cite{de2019inertial}. Besides, the relation between the inertial-type DCA and the classical DCA is pointed out. (2) establish the subsequential convergence of our refined version and the sequential convergence by further assuming the Kurdyka-Łojasiewicz (KL) property of the objective function.

The rest of the paper is organized as follows: Section \ref{sec:2} reviews some notations and preliminaries in convex and variational analysis. In Section \ref{sec:3}, we first introduce our refined inertial DC algorithm (RInDCA), followed by some DC programming applications. Next, the relation between the inertial-type DCA and the successive DCA is discussed. Section \ref{sec:4} focuses on establishing the subsequential convergence of RInDCA. By assuming additionally the KL property of the objective function, we prove in Section \ref{sec:5} the sequential convergence of RInDCA.  Section \ref{sec:6} summarizes the numerical results on checking copositivity of matrices and image denoising problem, in which the benefit of enlarged inertial step-size is demonstrated. Some concluding remarks are given in the final section.

\section{Notations and preliminaries}
\label{sec:2}
Let $\R^n$ denote the finite dimensional vector space endowed with the canonical inner product $\langle \cdot, \cdot \rangle$, and the Euclidean norm $\Vert \cdot \Vert$, i.e., $\Vert \cdot \Vert = \sqrt{\langle \cdot,\cdot\rangle}$. The entry of a vector \vx\ is denoted as $\vx_i$, and the entry of a matrix $\textbf{A}$ is denoted as $\textbf{A}_{i,j}$.

For an extended real-valued function $h:\R^n\rightarrow (-\infty,\infty]$, the set  $$\dom(h) := \{\textbf{x}\in \R^n:h(\textbf{x}) < \infty \}$$ denotes its effective domain. If $\dom (h) \neq \emptyset$, and $h$ does not attain the value $-\infty$, then $h$ is called a proper function. The set
$$\epi (h):=\{(\textbf{x},t):h(\textbf{x})\leq t,\textbf{x}\in \R^n,t\in \R\}$$ denotes the epigraph of $h$, and $h$ is closed (resp. convex) if $\epi (h)$ is closed (resp. convex). The conjugate function of $h$ is defined as 
\begin{equation*}
    h^*(\vy) = \sup\limits \{\langle \vy,\vx\rangle- h(\vx):\vx\in\R^n\}, \quad \vy\in \R^n.
\end{equation*}

Given a proper closed function $h: \R^n\rightarrow (-\infty,\infty]$, the Fr\'echet subdifferential of $h$ at $\textbf{x}\in \dom (h)$ is given by
\begin{equation*}
\partial^F h(\textbf{x}):= \left \{  \textbf{y}\in\R^n:\liminf\limits_{\textbf{z}\rightarrow \textbf{x}} \frac{h(\textbf{z})-h(\textbf{x})-\langle \textbf{y},\textbf{z}-\textbf{x}\rangle}{\Vert \textbf{z}-\textbf{x}\Vert} \geq 0\right\},
\end{equation*}
while for $\textbf{x}\notin \dom h$, $\partial^F h(\textbf{x}):= \emptyset$. The (limiting) subdifferential of $f$ at $\textbf{x}\in\dom (h)$ is defined as
\begin{align*}
\partial h(\textbf{x}) :=\{\textbf{y}\in\R^n:&\exists (\textbf{x}^k\rightarrow \textbf{x}, h(\textbf{x}^k)\rightarrow h(\textbf{x}), \textbf{y}^k \in \partial^F h(\textbf{x}^k)) \text{ such that } \textbf{y}^k\rightarrow \textbf{y}\},
\end{align*}
and $\partial h(\textbf{x}):=\emptyset$ if $\textbf{x}\notin \dom (h)$. Note that if $h$ is also convex, then the Fr\'echet subdifferential and the limiting subdifferential will coincide with the convex subdifferential, that is,  
$$\partial^{F} h(\textbf{x}) = \partial h(\textbf{x}) = \{\textbf{y}\in \R^n:h(\textbf{z}) \geq h(\textbf{x})+\langle \textbf{y},\textbf{z}-\textbf{x} \rangle \;\text{for all}\;\textbf{z}\in \R^n\}.$$

Given a nonempty set $\mathcal{C} \subseteq \R^n$, the distance from a point $\textbf{x}\in\R^n$ to $\mathcal{C}$ is denoted as $\text{dist}(\textbf{x},\mathcal{C}):=\inf\{\Vert \textbf{x}-\textbf{z}\Vert:\textbf{z}\in\mathcal{C}\}$. We now recall the Kurdyka-Łojasiewicz (KL) property \cite{bolte2007lojasiewicz,attouch2009convergence,attouch2013convergence}. For $\eta \in (0,\infty]$, we denote by $\Xi_{\eta}$ the set of all concave continuous functions $\varphi:[0,\eta)\rightarrow[0,\infty)$ that are continuously differentiable over $(0,\eta)$ with positive derivatives and satisfy $\varphi(0)=0$.
\begin{definition}[KL property]
	A proper closed function $h$ is said to satisfy the KL property at $\bar{\textbf{x}} \in \dom (\partial h) := \{\textbf{x}\in \R^n:\partial h(\textbf{x})\neq \emptyset\}$ if there exist $\eta \in (0,\infty]$, a neighborhood $U$ of $\bar{\textbf{x}}$, and a function $\varphi \in \Xi_{\eta}$ such that for all $\textbf{x}$ in the intersection
	\begin{equation*}
	U\cap \{\textbf{x}\in\R^n:h(\bar{\textbf{x}})<h(\textbf{x})<h(\bar{\textbf{x}})+\eta\},
	\end{equation*}
	it holds that 
	\begin{equation*}
	\varphi'(h(\textbf{x})-h(\bar{\textbf{x}}))\text{dist}(\zero,\partial h(\textbf{x})) \geq 1.
	\end{equation*}
	If $h$ satisfies the KL property at any point of $\dom (\partial h)$, then $h$ is called a KL function.
\end{definition}

The following uniformized KL property plays an important role in our sequential convergence analysis.
\begin{lemma}[Uniformized KL property, see \cite{bolte2014proximal}]
	\label{lem:UKL}
	Let $\Omega \subseteq \R^n$ be a compact set and let $h:\R^n \rightarrow (-\infty,\infty]$ be a proper closed function. If $h$ is constant on $\Omega$ and satisfies the KL property at each point of $\Omega$, then there exist $\varepsilon, \eta>0$ and $\varphi \in \Xi_{\eta}$ such that 
	\begin{equation*}
	{\normalfont\varphi'(h(\textbf{x})-h(\bar{\textbf{x}}))\text{dist}(\zero,\partial h(\textbf{x})) \geq 1}
	\end{equation*} 
	for any {\normalfont$\bar{\textbf{x}}\in \Omega$} and any {\normalfont$\textbf{x}$} satisfying {\normalfont$\text{dist}(\textbf{x},\Omega) < \varepsilon$ and $h(\bar{\textbf{x}}) < h(\textbf{x}) < h(\bar{\textbf{x}}) + \eta$}.
\end{lemma}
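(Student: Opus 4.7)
The plan is to run a standard compactness argument on top of the pointwise KL assumption. Two observations drive everything. First, because $h$ is constant on $\Omega$, there is a single value $h^\ast := h(\bar{\vx})$ independent of which $\bar{\vx}\in\Omega$ is picked, so the ``shift'' in the KL inequality is global. Second, compactness of $\Omega$ will let us reduce the pointwise data, namely one triple $(\varphi_{\bar{\vx}},U_{\bar{\vx}},\eta_{\bar{\vx}})$ per point, to a finite amount of data that can be combined into one $\varphi$, one $\eta$, and one $\varepsilon$.

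First I would invoke the KL property at each $\bar{\vx}\in\Omega$ to obtain an open neighborhood $U_{\bar{\vx}}$, a constant $\eta_{\bar{\vx}}>0$, and a desingularizing function $\varphi_{\bar{\vx}}\in \Xi_{\eta_{\bar{\vx}}}$ for which the pointwise KL inequality holds on $U_{\bar{\vx}}\cap\{\vx:h^\ast<h(\vx)<h^\ast+\eta_{\bar{\vx}}\}$. Since $\{U_{\bar{\vx}}\}_{\bar{\vx}\in\Omega}$ covers the compact set $\Omega$, I extract a finite subcover $U_1,\dots,U_N$ centred at points $\bar{\vx}_1,\dots,\bar{\vx}_N\in\Omega$ with associated data $\eta_i,\varphi_i$. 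The complement of $\bigcup_{i=1}^N U_i$ is closed and disjoint from $\Omega$; by compactness of $\Omega$ its distance to $\Omega$ is a strictly positive number, and I take any $\varepsilon>0$ strictly smaller than that distance, guaranteeing that every $\vx$ with $\text{dist}(\vx,\Omega)<\varepsilon$ lies in at least one $U_i$.

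Finally, set $\eta := \min_{1\le i\le N}\eta_i>0$ and
$$\varphi(s) := \sum_{i=1}^N \varphi_i(s),\qquad s\in[0,\eta).$$
The sum of finitely many members of $\Xi_{\eta_i}$ restricted to $[0,\eta)$ is again continuous, concave, continuously differentiable on $(0,\eta)$ with $\varphi'(s)=\sum_i \varphi_i'(s)>0$, and satisfies $\varphi(0)=0$, so $\varphi\in \Xi_\eta$. Now given any $\bar{\vx}\in\Omega$ and any $\vx$ with $\text{dist}(\vx,\Omega)<\varepsilon$ and $h^\ast<h(\vx)<h^\ast+\eta$, I pick an index $i$ with $\vx\in U_i$; because $h(\bar{\vx})=h(\bar{\vx}_i)=h^\ast$ and $\eta\le \eta_i$, the pointwise KL inequality at $\bar{\vx}_i$ applies at $\vx$ and gives $\varphi_i'(h(\vx)-h^\ast)\,\text{dist}(\zero,\partial h(\vx))\ge 1$. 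Since every $\varphi_j'$ is positive, $\varphi'(h(\vx)-h^\ast)\ge \varphi_i'(h(\vx)-h^\ast)$, and the desired inequality drops out. The one genuine subtlety, which I expect to be the main thing to state cleanly, is the uniform choice of $\varepsilon$: the pointwise neighborhoods only cover $\Omega$ itself, and we must thicken the cover to an $\varepsilon$-tube around $\Omega$; this is precisely where compactness is used, via the positive-distance argument above (equivalently, via a Lebesgue-number argument applied to the open cover $\{U_i\}$).
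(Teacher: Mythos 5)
Your argument is correct, and it is essentially the standard proof of this lemma: the paper itself offers no proof but cites Bolte--Sabach--Teboulle \cite{bolte2014proximal}, whose Lemma on the uniformized KL property is established by exactly your route (finite subcover of the compact set $\Omega$, a positive-distance/Lebesgue-number choice of $\varepsilon$, $\eta:=\min_i\eta_i$, and the summed desingularizer $\varphi=\sum_i\varphi_i$ whose derivative dominates each $\varphi_i'$). The only cosmetic caveat is the degenerate case where the finitely many $U_i$ already cover all of $\R^n$, in which the ``positive distance'' is taken over an empty set and any $\varepsilon>0$ works.
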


Now, let $h:\R^n\rightarrow (-\infty,\infty]$ be a proper closed and convex function, and let $\varepsilon > 0$, the set 
$$\partial_{\varepsilon} h(\textbf{x}) := \{\textbf{y}\in \R^n:h(\textbf{z}) \geq h(\textbf{x})+\langle \textbf{y},\textbf{z}-\textbf{x} \rangle -\varepsilon \;\text{for all}\;\textbf{z}\in \R^n\}$$ denotes the $\varepsilon$-subdifferential of $h$ at $\textbf{x}$, and any point in $\partial_{\varepsilon} h(\textbf{x})$ is called a $\varepsilon$-subgradient of $h$ at $\textbf{x}$. Clearly, $\partial_{\varepsilon} h(\textbf{x}) \neq \emptyset$ implies that
$\textbf{x} \in \dom (h)$. 
 A proper closed function $h$ is called $\sigma$-strongly convex (cf. $\sigma$-convex) with $\sigma \geq 0$ if for any $\textbf{x}, \textbf{z} \in \dom (h)$ and $\lambda \in [0,1]$, it holds that
\[h(\lambda \textbf{x} +(1-\lambda)\textbf{z}) \leq \lambda h(\textbf{x}) + (1-\lambda)h(\textbf{z})-\frac{\sigma}{2}\lambda(1-\lambda)\Vert \textbf{x}-\textbf{z}\Vert^2.\] 
Moreover, let $h$ be a $\sigma$-convex function. Then it was known \cite{Beck_1order} that for any $\textbf{x}\in \dom (\partial h)$, $\textbf{y} \in \partial h(\textbf{x})$ and $\textbf{z}\in \dom (h)$, we have
		\[h(\textbf{z})\geq h(\textbf{x})+\langle \textbf{y},\textbf{z}-\textbf{x}\rangle+\frac{\sigma}{2}\Vert \textbf{z}-\textbf{x}\Vert^2.\] 
Finally, we give a result related to strong convexity and $\varepsilon$-subdifferential, which will be used in analyzing our refined inexact DC algorithm. 
		
\begin{lemma}[\cite{partialDC}]
	\label{lem:2.1}
	Let $h:\R^n \rightarrow (-\infty,\infty]$ be a $\sigma$-convex function, and let $\varepsilon \geq 0$, $t \in (0,1]$.  Then for any {\normalfont$\textbf{x}\in \dom (h)$}, {\normalfont$\textbf{z}\in\dom (h)$}, and {\normalfont$\textbf{y}\in\partial_{\varepsilon} h(\textbf{x})$},
		{\normalfont
		\begin{equation}
		\label{eq:00}
		    h(\textbf{z})\geq h(\textbf{x}) +\langle \textbf{y},\textbf{z}-\textbf{x}\rangle + \frac{\sigma(1-t)}{2}\Vert \textbf{z}-\textbf{x}\Vert^2 - \frac{\varepsilon}{t}.
		\end{equation}
		}
\end{lemma}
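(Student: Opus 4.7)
The plan is to exploit the convex combination $\mathbf{w}_t := t\mathbf{z} + (1-t)\mathbf{x}$ for the given $t \in (0,1]$, and apply two inequalities to this auxiliary point: the $\varepsilon$-subgradient inequality evaluated at $\mathbf{w}_t$, and the defining $\sigma$-strong convexity inequality. Chaining them will produce the claimed bound after dividing by $t$.

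First, I would note that since $\mathbf{x}, \mathbf{z} \in \dom h$ and $\dom h$ is convex (as $h$ is convex), the point $\mathbf{w}_t$ lies in $\dom h$, so all quantities below are well-defined (finite on the left, meaningful on the right). Since $\mathbf{y} \in \partial_\varepsilon h(\mathbf{x})$, applying the defining inequality of the $\varepsilon$-subdifferential with test point $\mathbf{w}_t$ gives
\begin{equation*}
h(\mathbf{w}_t) \geq h(\mathbf{x}) + \langle \mathbf{y}, \mathbf{w}_t - \mathbf{x}\rangle - \varepsilon = h(\mathbf{x}) + t\langle \mathbf{y}, \mathbf{z}-\mathbf{x}\rangle - \varepsilon.
\end{equation*}
On the other hand, the $\sigma$-strong convexity of $h$ applied with parameter $\lambda = t$ yields
\begin{equation*}
h(\mathbf{w}_t) \leq t h(\mathbf{z}) + (1-t) h(\mathbf{x}) - \frac{\sigma}{2} t(1-t) \Vert \mathbf{z}-\mathbf{x}\Vert^2.
\end{equation*}

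Now I would combine the two inequalities by eliminating $h(\mathbf{w}_t)$ between them, obtaining
\begin{equation*}
h(\mathbf{x}) + t\langle \mathbf{y}, \mathbf{z}-\mathbf{x}\rangle - \varepsilon \leq t h(\mathbf{z}) + (1-t) h(\mathbf{x}) - \frac{\sigma}{2} t(1-t) \Vert \mathbf{z}-\mathbf{x}\Vert^2.
\end{equation*}
Rearranging yields
\begin{equation*}
t\bigl[h(\mathbf{z}) - h(\mathbf{x}) - \langle \mathbf{y}, \mathbf{z}-\mathbf{x}\rangle\bigr] \geq \frac{\sigma}{2} t(1-t) \Vert \mathbf{z}-\mathbf{x}\Vert^2 - \varepsilon.
\end{equation*}
Dividing by $t > 0$ produces the stated inequality \eqref{eq:00}.

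I do not expect any real obstacle: the proof is a one-line convex-combination trick that simultaneously recovers the classical strong-convexity refinement of the subgradient inequality (when $\varepsilon = 0$, any $t$ works and the factor $(1-t)$ can be pushed to its limit $1$) and the standard $\varepsilon$-subgradient inequality (when $\sigma = 0$ and $t = 1$). The only point worth flagging is the requirement $t > 0$ so that the final division is valid; the case $t = 1$ is included and simply drops the strong-convexity term, while endpoints with $t$ tending to $0$ are excluded, explaining the half-open interval $(0,1]$ in the hypothesis.
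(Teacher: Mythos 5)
Your proposal is correct and follows essentially the same route as the paper's proof: both apply the $\varepsilon$-subgradient inequality and the defining $\sigma$-strong convexity inequality at the intermediate point $\textbf{x}+t(\textbf{z}-\textbf{x})$, eliminate the value of $h$ at that point, and divide by $t$. No discrepancies to report.
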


\section{A refined inertial DC algorithm} \label{sec:3}
In this section, we first describe a general inertial-type DC algorithm for problem \eqref{P}, which is taken from \cite{de2019inertial} except that the values of $\lambda$ and $\gamma$ are not specified. By setting different  $\lambda$ and $\gamma$ we derive the classical DCA, the inertial DC algorithm (InDCA) proposed in \cite{de2019inertial}, and our refined version (RInDCA). Next, some practical applications of DC programming problems are introduced, followed by a discussion of the relation between the inertial-type DCA and the successive DCA.

From now on, suppose that the objective function $f$ has a DC decomposition $f = f_1-f_2$, where $f_1$ is $\sigma_1$-convex and $f_2$ is $\sigma_2$-convex. We describe below in Algorithm \ref{algo:RInDCA} a general inertial-type DC algorithm for \eqref{P}.
\begin{algorithm}[ht!]
	\caption{A general inertial-type DC algorithm for \eqref{P}}
	\label{algo:RInDCA}
	\KwIn{$\textbf{x}^0 \in \dom (f_1)$; $\lambda \in[0,1)$; $\gamma \geq 0$; $\textbf{x}^{-1} = \textbf{x}^0$.}
	
	\For{$k=0, 1, 2, \cdots$}
	{   
		find $\textbf{x}^{k+1}\in \R^n$ such that
		\begin{equation}
		\label{eq:3.2}
		\partial_{\varepsilon_{k+1}} f_1(\textbf{x}^{k+1}) \cap (\partial f_2(\textbf{x}^k) + \gamma(\textbf{x}^k-\textbf{x}^{k-1}))\neq \emptyset  
		\end{equation}
		with $0 \leq \varepsilon_{k+1} \leq \lambda \frac{\sigma_2}{2}\Vert \textbf{x}^{k+1}-\textbf{x}^k\Vert^2$.
		
	}	
\end{algorithm}

Note that Algorithm \ref{algo:RInDCA} includes exact and inexact cases. The exact case occurs when $\lambda = 0$, then $\varepsilon_{k+1} = 0$ and $\partial_{\varepsilon_{k+1}} f_1(\textbf{x}^{k+1})  = \partial f_1(\textbf{x}^{k+1})$, thus the iteration point $\textbf{x}^{k+1}$ in \eqref{eq:3.2} is obtained by exactly solving the next convex subproblem:
\begin{equation}
    \label{eq:InDCA}
    \textbf{x}^{k+1} \in \argmin \{f_1(\textbf{x}) - \langle \textbf{y}^k + \gamma(\textbf{x}^k-\textbf{x}^{k-1}),\textbf{x}\rangle:\textbf{x}\in\R^n\},
\end{equation}
where $\textbf{y}^k\in\partial f_2(\textbf{x}^k)$. On the other hand, the inexact case occurs when $\lambda > 0$ and $\sigma_2 > 0$, then the iterate $\vx^{k+1}$ in \eqref{eq:3.2}, which is an approximate minimizer of \eqref{eq:InDCA}, could be computed by some bundle methods (see \cite{de2019inertial}). The classical DCA is derived when setting $\lambda = 0$ and $\gamma = 0$ in Algorithm \ref{algo:RInDCA}.  Next, we denote the exact (resp. inexact) versions of InDCA and RInDCA as \InDCAe\ and \RInDCAe\ (resp. \InDCAine\ and \RInDCAine). The following table summarizes the four algorithms which are recovered by specifying some parameters in Algorithm \ref{algo:RInDCA} together with the assumptions about $\sigma_1$ and  $\sigma_2$. 
\begin{table}[h]
	\begin{center}
		\begin{tabular}{|c||r|c|c|c|}
	\hline
	Algorithm & $\lambda$	& $\gamma$ & t & Assumption \\
	\hline\hline
	\InDCAe & 0 & $[0,\sigma_2/2)$ & - &$\sigma_2>0$\\
	\InDCAine & (0,1) & $[0,(1-\lambda)\sigma_2/2)$ & -&$\sigma_2>0$\\
	\RInDCAe & 0 & $[0,(\sigma_1+\sigma_2)/2) $ & - &$\sigma_1+\sigma_2>0$ \\
	\RInDCAine & (0,1) & $[0, (\sigma_1(1-t)+\sigma_2-\lambda\sigma_2/t)/2)$ & (0,1] &$\sigma_2>0$\\
	\hline
\end{tabular}
	\end{center}
\end{table}

Comparing \InDCAe\ with \RInDCAe, the inertial range of the latter is larger than the former if $\sigma_1>0$. Moreover, \RInDCAe\ is applicable when $\sigma_2=0$ and $\sigma_1>0$, while this is not the case for \InDCAe. Next, for comparing \InDCAine\ and \RInDCAine, we need to optimize $t$ in \RInDCAine\ such that the supremum of the inertial step-size $\gamma$ is maximized. To this end, fixing $\lambda\in(0,1)$ and let $\sigma(\lambda,t):=\sigma_1(1-t)+\sigma_2-\lambda\sigma_2/t$, we can maximize  $\sigma(\lambda,t)$ with respect to $t\in[0,1)$. It is easy to verify that
\begin{equation*}
 \argmax\{\sigma(\lambda,t):t\in(0,1]\} =
\begin{cases}
1 & \text{if $\sigma_1 = 0$},\\
1 & \text{if $\sigma_1 \neq 0, \sqrt{\lambda\sigma_2/\sigma_1} \geq 1$,}\\
\sqrt{\lambda\sigma_2/\sigma_1}  & \text{if $\sigma_1 \neq 0, \sqrt{\lambda\sigma_2/\sigma_1} < 1$.}
\end{cases}
\end{equation*}
For the first two cases, $\sigma(\lambda,1) = (1-\lambda)\sigma_2$, this implies the same inertial range between \InDCAine\ and \RInDCAine. Next, for the last case, we have $\sigma(\lambda,1)<\sigma(\lambda,\sqrt{\lambda\sigma_2/\sigma_1})$, thus the inertial range of \RInDCAine\ is larger than that in \InDCAine. Now, let $H_1(\lambda) := \sigma(\lambda,\sqrt{\lambda\sigma_2/\sigma_1})/2$ and $H_2(\lambda) :=(1-\lambda)\sigma_2/2$, we visualize in Fig. \ref{Inertial:fig1} the values of $H_1$ for $(\sigma_1,\sigma_2) \in\{ (1,1), (2,1), (3,1), (4,1)\}$ as well as the values of $H_2$ for $\sigma_1 = 1$ with respect to $\lambda\in(0,1)$ respectively.
\begin{figure}[h!]
	\centering
	\includegraphics[width=0.6\linewidth]{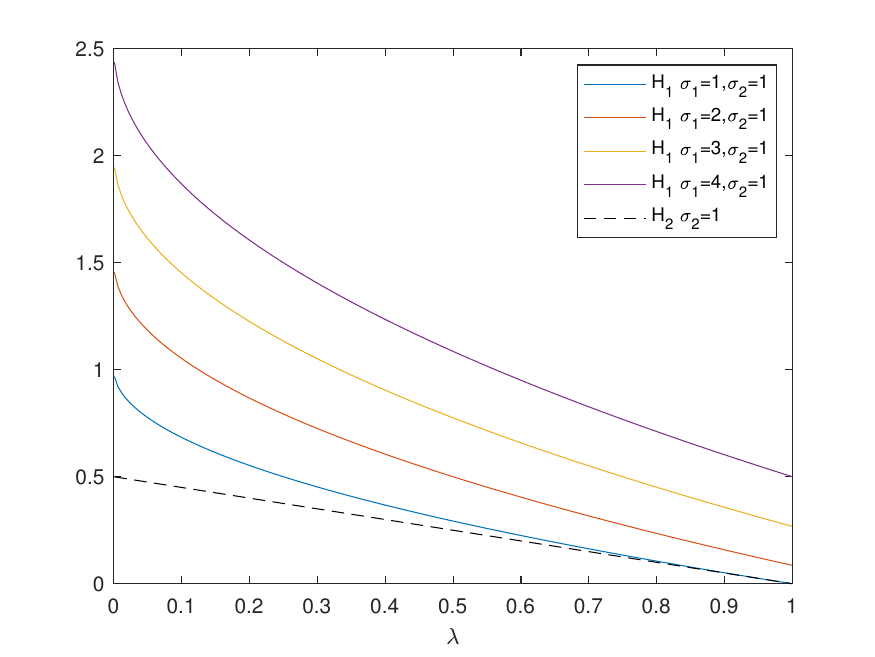}
	\caption{The values of $H_1$ with $(\sigma_1,\sigma_2) \in\{ (1,1), (2,1), (3,1), (4,1)\}$  and $H_2$ with respect to $\lambda \in (0,1)$}
	\label{Inertial:fig1}
\end{figure}
It can be observed that when fixing $\lambda\in (0,1)$ and $\sigma_2=1$, the value of $H_1(\lambda)$ is getting larger as $\sigma_1$ increases, moreover, all of these values are larger than $H_2(\lambda)$. This observation implies that a larger strong convexity parameter of $f_1$ will result in a larger inertial range.

Now, we introduce some DC programming problems, in which the first DC components are strongly convex.
\begin{example}[Convex constraint quadratic problem]
\label{exam:3.1}
    Consider the quadratic problem with convex constraint 
    $$\min \{f(\textbf{x}) =  \frac{1}{2}\textbf{x}^{\top} \textbf{A}\textbf{x}+\delta_Y(\textbf{x}):\textbf{x}\in\R^n\},$$
    where $\textbf{A}\in\R^{n\times n}$ is symmetric and $Y$ is a closed convex set.  This model has a DC decomposition $f= f_1-f_2$ with $f_1(\textbf{x})=\frac{L}{2}\Vert \textbf{x}\Vert^2 + \delta_{Y}(\textbf{x})$ and $f_2(\textbf{x}) = \frac{L}{2}\Vert \textbf{x}\Vert^2 - \frac{1}{2}\textbf{x}^{\top}\textbf{A}\textbf{x}$, where $L \geq \Vert \textbf{A}\Vert$\footnote{$\Vert \textbf{A}\Vert$ is the spectral norm of $\textbf{A}$.}. When $Y$ is the nonnegative orthant $\R_+^n$, DCA applied for the above DC decomposition
    has been used for checking copositivity of matrices \cite{JP_copo}. Note that for this DC decomposition, the first DC component is $L$-convex.  
\end{example}
 
\begin{example}[1D signals denoising]
	\label{exam:3.2}
	Consider the nonconvex sparse signal recovery problem for 1D signals:
	\begin{equation*}
	\min \{\frac{\mu}{2}\Vert \textbf{x}-\textbf{b}\Vert^2 + \sum_{i=1}^{n-1}\phi(|\textbf{x}_{i+1}-\textbf{x}_i|):\vx\in\R^n\},
	\end{equation*}
	where $\mu >0$ is a trade-off constant, $\textbf{b}$ is the noise signal in $\R^n$, and $\phi$ is a concave function for inducing sparsity, e.g., $\phi(r) := \log(1+2r)/r$. In this model, taking $\phi(r) = \log(1+2r)/r$, we obtain a DC decomposition as $f_1 - f_2$ with $f_1(\textbf{x}) = \frac{\mu}{2}\Vert \textbf{x}-\textbf{b}\Vert^2 + \sum_{i=1}^{n-1}|\textbf{x}_{i+1}-\textbf{x}_i|$ and $f_2 = \sum_{i=1}^{n-1}|\textbf{x}_{i+1}-\textbf{x}_i| - \sum_{i=1}^{n-1}\phi(|\textbf{x}_{i+1}-\textbf{x}_i|)$. Note that $f_1$ is strongly convex, while $f_2$ is not. Thus InDCA can not be directly applied. In order to apply InDCA, a strong convexity term $\psi:\textbf{x}\mapsto \rho\Vert \textbf{x} \Vert^2/2$ is added into both $f_1$ and $f_2$ in \cite{de2019inertial}, yielding the new DC decomposition $(f_1+\psi) - (f_2+\psi)$, where the DC components are both strongly convex.
\end{example}

\begin{remark}
As shown in Examples \ref{exam:3.1} and \ref{exam:3.2}, the first DC components are strongly convex. However, InDCA only takes into account the strong convexity of the second DC component. This often causes the inertial range smaller than that of our refined version RInDCA, which involves the strong convexity of both DC components. 
\end{remark}

Before ending this section, we mention out that the inertial-type DCA (both \InDCAe\ and \RInDCAe) is related to the successive DCA, namely SDCA (see \cite{DCA30}). Let us take at the $k$-th iteration of DCA the successive DC decomposition $f= (f_1+\varphi^k) - (f_2+\varphi^k)$ with $\varphi^k(\vx) = \frac{\gamma}{2}\|\vx - \vx^{k-1}\|^2$, then DCA applied to this special DC decomposition yields
\begin{equation}
\label{eq:SDCA}
    \vx^{k+1}\in \argmin \{ f_1(\vx) + \varphi^k(\vx) - \langle \nabla \varphi^k(\vx^k) + \vy^k , \vx \rangle:\vx\in\R^n\},
\end{equation}
where $\vy^k\in \partial f_2(\vx^k)$. Then \eqref{eq:SDCA} reads as
\begin{equation}
    \label{eq:DCAphik}
    \vx^{k+1}\in \argmin \{ f_1(\vx) + \varphi^k(\vx) - \langle \vy^k +\gamma (\vx^k-\vx^{k-1}) , \vx \rangle :\vx\in\R^n\}.
\end{equation}
Comparing SDCA described in \eqref{eq:DCAphik} with \RInDCAe\ (or \InDCAe) in \eqref{eq:3.2}, we observe that the only difference is the presence of the proximal term $\varphi^k(\vx)$ in \eqref{eq:DCAphik}. This term plays a role of regularizer which can be interpreted as finding a point $\vx^{k+1}$ close to $\vx^{k-1}$ and a minimizer of the function $\textbf{x}\mapsto f_1(\textbf{x}) - \langle \textbf{y}^k + \gamma(\textbf{x}^k-\textbf{x}^{k-1}),\textbf{x}\rangle$. This is the basic idea of proximal point methods, thus this SDCA is often called proximal DCA as well. The absence of $\varphi^k$ in \RInDCAe\ means that the iteration point $\vx^{k+1}$ is not supposed to be close to $\vx^{k-1}$, which will lead to potential acceleration. In conclusion, the term $\varphi^k$ plays a key role to make \RInDCAe\ differ from SDCA, and possibly yields acceleration in \RInDCAe.  

\begin{remark}
Indeed, the convex problem of \eqref{eq:DCAphik} is irrelevant to $\vx^{k-1}$ since
\begin{equation*}
    \eqref{eq:DCAphik} = \argmin \{ f_1(\vx) + \frac{\gamma}{2}\|\vx\|^2 - \langle \vy^k +\gamma \vx^k , \vx \rangle :\vx\in\R^n\},
\end{equation*}
which is exactly the subproblem of DCA for the fixed DC decomposition $$f = (f_1 + \frac{\gamma}{2}\Vert \cdot \Vert^2) - (f_2 + \frac{\gamma}{2}\Vert \cdot \Vert^2).$$
\end{remark}

\section{Subsequential convergence analysis}
\label{sec:4}
This section focuses on  showing the  subsequential convergence of RInDCA (both \RInDCAe\ and \RInDCAine), some results for further developing the sequential convergence of RInDCA in the next section are also proved.  
\subsection{Subsequential convergence of \RInDCAe}
The following assumption is made in this subsection.
\begin{assumption}\label{assump:RInDCAe}
    Suppose that the DC components $f_1$ and $f_2$ are respectively $\sigma_1$-convex and $\sigma_2$-convex with $\sigma_1+\sigma_2>0$.
\end{assumption} 
Now, we start to establish the subsequential convergence of \RInDCAe. 
\begin{lemma}
	\label{prop:3.1}
	Let {\normalfont$\{\textbf{x}^k\}$} be the sequence generated by \RInDCAe\ and $\gamma \in [0,(\sigma_1+\sigma_2)/2)$, then the sequence {\normalfont$\{f(\textbf{x}^k)+ \frac{\sigma_1+\sigma_2-\gamma}{2}\Vert \textbf{x}^k-\textbf{x}^{k-1}\Vert^2 \}$} is nonincreasing and for all $k \geq 0$,
	{\normalfont
	\begin{align}
	\label{eq:prop:3.1}
	f(\textbf{x}^{k+1}) + \frac{\sigma_1+\sigma_2-\gamma}{2}\Vert \textbf{x}^{k+1}-\textbf{x}^k\Vert^2 
	\leq& f(\textbf{x}^{k}) + \frac{\sigma_1+\sigma_2-\gamma}{2}\Vert \textbf{x}^{k}-\textbf{x}^{k-1}\Vert^2 \nonumber\\
	&-\frac{\sigma_1+\sigma_2-2\gamma}{2}\Vert \textbf{x}^k-\textbf{x}^{k-1}\Vert^2.
	\end{align}}
\end{lemma}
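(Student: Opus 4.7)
The plan is to extract a subgradient identity from the inclusion \eqref{eq:3.1}, exploit the strong convexity of both DC components separately, and then absorb the resulting cross term via Young's inequality, obtaining a one-step descent estimate on the Lyapunov-type quantity $f(\mathbf{x}^k) + \tfrac{\sigma_1+\sigma_2-\gamma}{2}\|\mathbf{x}^k-\mathbf{x}^{k-1}\|^2$.

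First, I would fix an iteration index $k$ and unpack \eqref{eq:3.1}: there exist $\mathbf{z}^{k+1}\in\partial f_1(\mathbf{x}^{k+1})$ and $\mathbf{y}^k\in\partial f_2(\mathbf{x}^k)$ such that $\mathbf{z}^{k+1}=\mathbf{y}^k+\gamma(\mathbf{x}^k-\mathbf{x}^{k-1})$. Applying the strong convexity inequality (the one recalled just before Lemma~\ref{lem:2.1}) to $f_1$ at $\mathbf{x}^{k+1}$ evaluated at $\mathbf{x}^k$ gives
\begin{equation*}
f_1(\mathbf{x}^k)\geq f_1(\mathbf{x}^{k+1})+\langle \mathbf{z}^{k+1},\mathbf{x}^k-\mathbf{x}^{k+1}\rangle+\tfrac{\sigma_1}{2}\|\mathbf{x}^{k+1}-\mathbf{x}^k\|^2,
\end{equation*}
and the same inequality applied to $f_2$ at $\mathbf{x}^k$ evaluated at $\mathbf{x}^{k+1}$ gives
\begin{equation*}
f_2(\mathbf{x}^{k+1})\geq f_2(\mathbf{x}^k)+\langle \mathbf{y}^k,\mathbf{x}^{k+1}-\mathbf{x}^k\rangle+\tfrac{\sigma_2}{2}\|\mathbf{x}^{k+1}-\mathbf{x}^k\|^2.
\end{equation*}

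Next I would subtract these and use $f=f_1-f_2$ together with the subgradient identity $\mathbf{z}^{k+1}-\mathbf{y}^k=\gamma(\mathbf{x}^k-\mathbf{x}^{k-1})$ to arrive at
\begin{equation*}
f(\mathbf{x}^{k+1})-f(\mathbf{x}^k)\leq \gamma\langle \mathbf{x}^k-\mathbf{x}^{k-1},\mathbf{x}^{k+1}-\mathbf{x}^k\rangle-\tfrac{\sigma_1+\sigma_2}{2}\|\mathbf{x}^{k+1}-\mathbf{x}^k\|^2.
\end{equation*}
Young's inequality $2\langle a,b\rangle\leq \|a\|^2+\|b\|^2$ then absorbs the cross term as $\tfrac{\gamma}{2}\|\mathbf{x}^k-\mathbf{x}^{k-1}\|^2+\tfrac{\gamma}{2}\|\mathbf{x}^{k+1}-\mathbf{x}^k\|^2$. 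Rearranging terms and using the identity $\tfrac{\sigma_1+\sigma_2-\gamma}{2}-\tfrac{\sigma_1+\sigma_2-2\gamma}{2}=\tfrac{\gamma}{2}$ recovers exactly \eqref{eq:prop:3.1}. Monotonicity of the Lyapunov sequence then follows because $\gamma<(\sigma_1+\sigma_2)/2$ makes the residual coefficient $\tfrac{\sigma_1+\sigma_2-2\gamma}{2}$ strictly positive (or at worst zero on the boundary, which is excluded by the strict inequality).

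I do not expect any genuine obstacle in executing this plan; the only mildly delicate point is choosing the splitting in Young's inequality so that the two descent contributions assemble into the same Lyapunov functional on both sides, and the symmetric $\tfrac12,\tfrac12$ split is precisely what does the job. A subtlety worth flagging in the write-up is the edge case $k=0$: the convention $\mathbf{x}^{-1}=\mathbf{x}^0$ makes the inertial term vanish, so the inequality degenerates to the classical one-step decrease of DCA and requires no separate treatment.
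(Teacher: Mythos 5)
Your proposal is correct and follows essentially the same route as the paper's proof: extract $\mathbf{y}^k\in\partial f_2(\mathbf{x}^k)$ with $\mathbf{y}^k+\gamma(\mathbf{x}^k-\mathbf{x}^{k-1})\in\partial f_1(\mathbf{x}^{k+1})$, apply the strong-convexity subgradient inequalities to $f_1$ and $f_2$ separately, sum them, and absorb the cross term $\gamma\langle \mathbf{x}^k-\mathbf{x}^{k-1},\mathbf{x}^{k+1}-\mathbf{x}^k\rangle$ via the same $\tfrac12,\tfrac12$ Young estimate before rearranging into the Lyapunov inequality \eqref{eq:prop:3.1}. No gaps; your closing remarks on the strict positivity of $\tfrac{\sigma_1+\sigma_2-2\gamma}{2}$ and the $k=0$ convention are consistent with the paper.
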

\begin{proof}
	Because $\textbf{x}^{k+1}$ satisfies (\ref{eq:InDCA}), thus $\textbf{y}^k+\gamma(\textbf{x}^k-\textbf{x}^{k-1}) \in \partial f_1(\textbf{x}^{k+1})$, then by $\sigma_1$-convexity of $f_1$, we have 
	\begin{equation}
	\label{eq:2.6}
	f_1(\textbf{x}^k) \geq f_1(\textbf{x}^{k+1}) + \langle \textbf{y}^k + \gamma(\textbf{x}^k-\textbf{x}^{k-1}),\textbf{x}^k-\textbf{x}^{k+1}\rangle+\frac{\sigma_1}{2}\Vert \textbf{x}^{k+1}-\textbf{x}^k\Vert^2.
	\end{equation} 
	On the other hand, it follows from $\textbf{y}^k \in \partial f_2(\textbf{x}^k)$ and $\sigma_2$-convexity of $f_2$ that 
	\begin{equation}
	\label{eq:2.7}
	f_2(\textbf{x}^{k+1}) \geq f_2(\textbf{x}^k) + \langle \textbf{y}^k,\textbf{x}^{k+1}-\textbf{x}^k \rangle +\frac{\sigma_2}{2}\Vert \textbf{x}^{k+1}-\textbf{x}^k\Vert^2.
	\end{equation}
	Summing (\ref{eq:2.6}) to (\ref{eq:2.7}) and reshuffling the terms, we derive that 
	\begin{equation}
	\label{eq:2.8}
	f(\textbf{x}^k) \geq f(\textbf{x}^{k+1}) + \gamma\langle \textbf{x}^k-\textbf{x}^{k-1},\textbf{x}^k-\textbf{x}^{k+1}\rangle + \frac{\sigma_1+\sigma_2}{2}\Vert \textbf{x}^{k+1}-\textbf{x}^k\Vert^2.
	\end{equation}
	By applying $\langle \textbf{x}^k-\textbf{x}^{k-1},\textbf{x}^k-\textbf{x}^{k+1}\rangle \geq -(\Vert \textbf{x}^k-\textbf{x}^{k-1}\Vert^2 + \Vert \textbf{x}^k-\textbf{x}^{k+1}\Vert^2)/{2}$ to \eqref{eq:2.8}, we obtain
	\begin{align*}
	f(\textbf{x}^{k+1}) + \frac{\sigma_1+\sigma_2-\gamma}{2}\Vert \textbf{x}^{k+1}-\textbf{x}^k\Vert^2 
	\leq& f(\textbf{x}^{k}) + \frac{\sigma_1+\sigma_2-\gamma}{2}\Vert \textbf{x}^{k}-\textbf{x}^{k-1}\Vert^2 \nonumber\\
	&-\frac{\sigma_1+\sigma_2-2\gamma}{2}\Vert \textbf{x}^k-\textbf{x}^{k-1}\Vert^2.
	\end{align*}
	Moreover, we get from $\gamma < (\sigma_1+\sigma_2)/2$ that $(\sigma_1 + \sigma_2 -2\gamma)/2 > 0$, thus the sequence $\{f(\textbf{x}^k)+ \frac{\sigma_1+\sigma_2-\gamma}{2}\Vert \textbf{x}^k-\textbf{x}^{k-1}\Vert^2 \}$ is nonincreasing. 
	\qed
\end{proof}

\begin{theorem}
	\label{them:1}
	Let {\normalfont$\{\textbf{x}^k\}$} be the sequence generated by \RInDCAe\ and $\gamma \in [0,(\sigma_1+\sigma_2)/2)$. Then the following statements hold:
	\begin{enumerate}
		\item[(i)] {\normalfont$\lim\limits_{k\rightarrow \infty}{\Vert \textbf{x}^{k}-\textbf{x}^{k-1}\Vert  = 0}$};
		\item[(ii)] If {\normalfont$\{\textbf{x}^k\}$} is bounded, then any limit point of {\normalfont$\{\textbf{x}^k\}$} is a critical point of \eqref{P}. 
	\end{enumerate}
\end{theorem}

\begin{proof}
	$(i)$ Let us denote $\eta_1 := (\sigma_1+\sigma_2-\gamma)/2$ and $\eta_2 := (\sigma_1+\sigma_2-2\gamma)/{2}$. Clearly, $\eta_1>0$, $\eta_2 >0$, and the inequality \eqref{eq:prop:3.1} reads as 
	\begin{equation}
	\label{eq1:them3.1}
	f(\textbf{x}^{k+1}) + \eta_1\Vert \textbf{x}^{k+1}-\textbf{x}^k\Vert^2 \leq f(\textbf{x}^{k}) + \eta_1\Vert \textbf{x}^{k}-\textbf{x}^{k-1}\Vert^2 -\eta_2 \Vert \textbf{x}^k-\textbf{x}^{k-1}\Vert^2.
	\end{equation}
	By summing (\ref{eq1:them3.1}) from $k=0$ to $n$, we obtain that 
	\begin{equation}\label{eq:thm3_1}
	\eta_2\sum_{k=0}^{n}\Vert \textbf{x}^{k}-\textbf{x}^{k-1}\Vert^2 \leq f(\textbf{x}^0) - \left( f(\textbf{x}^{n+1})+\eta_1\Vert \textbf{x}^{n+1}-\textbf{x}^n\Vert^2\right). 
	\end{equation}
	Recalling that $f$ is bounded below by a scalar $f^*$ (term (b) of Assumption \ref{intro:assump1}), then we have
	$$f(\textbf{x}^{n+1})+\eta_1\Vert \textbf{x}^{n+1}-\textbf{x}^n\Vert^2\geq f^*,$$
	thus we obtain
	$$\sum_{k=0}^{n}\Vert \textbf{x}^{k}-\textbf{x}^{k-1}\Vert^2 \leq \frac{f(\textbf{x}^0)-f^*}{\eta_2},\quad \forall n\in \N.$$ 
	Thus, $\sum_{k=0}^{\infty} \Vert \textbf{x}^k-\textbf{x}^{k-1}\Vert^2 < \infty$ which implies $\lim\limits_{k\rightarrow \infty} \Vert \textbf{x}^k -\textbf{x}^{k-1}\Vert = 0.$\\
	$(ii)$ The boundedness of $\{\textbf{x}^k\}$ together with term (a) of Assumption \ref{intro:assump1} implies the boundedness of $\{\textbf{y}^k\}$ (see Theorem 3.16 in \cite{Beck_1order}). Let $\bar{\textbf{x}}$ be any limit point of $\{\textbf{x}^k\}$, then there exists a convergent subsequence such that $\lim\limits_{i\rightarrow \infty}\textbf{x}^{k_i} = \bar{\textbf{x}}$.  Moreover, since $\{\textbf{y}^{k_i}\}$ is bounded in $\R^n$, there exists a convergent subsequence of $\{\textbf{y}^{k_i}\}$. Without loss of generality, we can assume that the sequence $\{\textbf{y}^{k_i}\}$ is convergent and $\lim\limits_{i\rightarrow \infty}\textbf{y}^{k_i} = \bar{\textbf{y}}$. Then taking into account that $\textbf{y}^{k_i}\in \partial f_2(\textbf{x}^{k_i})$ for all $i$, $\textbf{y}^{k_i} \rightarrow \bar{\textbf{y}}$,  $\textbf{x}^{k_i} \rightarrow \bar{\textbf{x}}$, and the closedness of the graph $\partial f_2$ (see Theorem 24.4 in \cite{rockafellar1970convex}), we obtain 
	\begin{equation}
	    \label{eq:thm3_3}
	    \bar{\textbf{y}}\in \partial f_2(\bar{\textbf{x}}).
	\end{equation}
	On the other hand, the next relation holds for all $i$
	\[\textbf{y}^{k_i}+\gamma(\textbf{x}^{k_i}-\textbf{x}^{k_i-1})\in\partial f_1(\textbf{x}^{k_i+1}). \]  
	By the fact that $\lim\limits_{i\rightarrow\infty}\textbf{x}^{k_i+1} = \bar{\textbf{x}}$ since $\lim\limits_{i\rightarrow \infty}{\Vert \textbf{x}^{k_i+1}-\textbf{x}^{k_i}\Vert  = 0}$, then  combining $\textbf{y}^{k_i} \rightarrow \bar{\textbf{y}}$, $\gamma (\textbf{x}^{k_i}-\textbf{x}^{k_i-1}) \rightarrow 0$, $\textbf{x}^{k_i+1} \rightarrow \bar{\textbf{x}}$, and the closedness of the graph $\partial f_1$, we have \begin{equation}
	    \label{eq:thm3_4}
	    \bar{\textbf{y}}\in \partial f_1(\bar{\textbf{x}}).
	\end{equation}
	We conclude from \eqref{eq:thm3_3} and \eqref{eq:thm3_4} that $\bar{\textbf{y}} \in \partial f_1(\bar{\textbf{x}})\cap \partial f_2(\bar{\textbf{x}}) \neq \emptyset$, thus $\bar{\textbf{x}}$ is a critical point of problem (\ref{P}).
	\qed
\end{proof}

\subsection{Subsequential convergence of \RInDCAine}
\label{sub:3.2}
The following assumption is made in this part.
\begin{assumption}\label{assump:RInDCAine}
    Suppose that the second DC component $f_2$ is $\sigma_2$-convex with $\sigma_2>0$.
\end{assumption} 
Now, we start to establish the subsequential convergence of \RInDCAine. 

\begin{lemma}
\label{lem:3.2}
	Let {\normalfont$\{\textbf{x}^k\}$} be the sequence generated by \RInDCAine\ and $\gamma \in [0,\bar{\sigma}/2)$, where $\bar{\sigma} = \sigma_1(1-t) + \sigma_2 -\lambda \sigma_2/t$ with $\lambda\in(0,1)$, $t\in(0,1]$. Then the sequence {\normalfont$\{f(\textbf{x}^k) + \frac{\bar{\sigma}-\gamma}{2}\Vert \textbf{x}^k-\textbf{x}^{k-1}\Vert^2\}$} is nonincreasing and for all $k\geq 0$, we have 
	\begin{align}
	\label{eq:prop3.2.1}
	f({\normalfont \textbf{x}^{k+1} }) + \frac{\bar{\sigma} -\gamma}{2}\Vert {\normalfont \textbf{x}^{k+1}-\textbf{x}^k}\Vert^2 
	\leq& f({\normalfont \textbf{x}^k }) + \frac{\bar{\sigma} -\gamma}{2}\Vert {\normalfont \textbf{x}^k-\textbf{x}^{k-1} }\Vert^2 \nonumber\\
	&-\frac{\bar{\sigma}-2\gamma}{2}\Vert {\normalfont\textbf{x}^k-\textbf{x}^{k-1} } \Vert^2. 
	\end{align}
\end{lemma}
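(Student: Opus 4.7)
The plan is to mimic the proof of Lemma \ref{prop:3.1} essentially line by line, with the single modification that the inclusion $y^k+\gamma(x^k-x^{k-1})\in\partial f_1(x^{k+1})$ is replaced by the inexact inclusion $y^k+\gamma(x^k-x^{k-1})\in\partial_{\varepsilon^{k+1}} f_1(\textbf{x}^{k+1})$, which forces us to invoke Lemma \ref{lem:2.1} rather than the plain strong-convexity inequality.

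First, pick $\textbf{y}^k\in\partial f_2(\textbf{x}^k)$ with $\textbf{y}^k+\gamma(\textbf{x}^k-\textbf{x}^{k-1})\in\partial_{\varepsilon^{k+1}} f_1(\textbf{x}^{k+1})$, which exists by \eqref{eq:3.3}. Applying Lemma \ref{lem:2.1} to the $\sigma_1$-convex function $f_1$ at $\textbf{x}^{k+1}$ (with test point $\textbf{x}^k$, parameter $t\in(0,1]$, and error $\varepsilon^{k+1}$) yields an analogue of \eqref{eq:2.6} but with the factor $\sigma_1/2$ degraded to $\sigma_1(1-t)/2$ and an extra $-\varepsilon^{k+1}/t$ on the right-hand side. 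The inequality \eqref{eq:2.7} for $f_2$ carries over verbatim since we still have an exact subgradient of $f_2$. Summing these two inequalities produces the analogue of \eqref{eq:2.8}, namely
\begin{equation*}
f(\textbf{x}^k)\geq f(\textbf{x}^{k+1})+\gamma\langle\textbf{x}^k-\textbf{x}^{k-1},\textbf{x}^k-\textbf{x}^{k+1}\rangle+\frac{\sigma_1(1-t)+\sigma_2}{2}\Vert\textbf{x}^{k+1}-\textbf{x}^k\Vert^2-\frac{\varepsilon^{k+1}}{t}.
\end{equation*}

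The key step is then to absorb the slack term: using the prescribed bound $\varepsilon^{k+1}\leq\lambda\sigma_2/2\,\Vert\textbf{x}^{k+1}-\textbf{x}^k\Vert^2$ from \eqref{eq:3.3}, the coefficient of $\Vert\textbf{x}^{k+1}-\textbf{x}^k\Vert^2$ collapses precisely to $\bar{\sigma}_t/2=(\sigma_1(1-t)+\sigma_2-\lambda\sigma_2/t)/2$. This is where the peculiar definition of $\bar{\sigma}_t$ in the hypothesis becomes transparent, and it is the only genuinely new ingredient relative to Lemma \ref{prop:3.1}.

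Finally, exactly as in the exact case, I would apply the Young-type bound $\langle\textbf{x}^k-\textbf{x}^{k-1},\textbf{x}^k-\textbf{x}^{k+1}\rangle\geq -\tfrac{1}{2}(\Vert\textbf{x}^k-\textbf{x}^{k-1}\Vert^2+\Vert\textbf{x}^{k+1}-\textbf{x}^k\Vert^2)$, rearrange, and split $\gamma/2=(\bar{\sigma}_t-\gamma)/2-(\bar{\sigma}_t-2\gamma)/2$ to put the inequality in the desired telescoping form \eqref{eq:prop3.2.1}. Monotonicity of $\{f(\textbf{x}^k)+\tfrac{\bar{\sigma}_t-\gamma}{2}\Vert\textbf{x}^k-\textbf{x}^{k-1}\Vert^2\}$ then follows from $\gamma<\bar{\sigma}_t/2$, which makes the last term nonpositive. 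There is no real obstacle here; the only subtlety worth double-checking is that the chosen $t\in(0,1]$ and the step-size restriction on $\gamma$ are jointly consistent with $\bar{\sigma}_t>0$, which is imposed as an assumption of the lemma.
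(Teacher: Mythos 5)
Your proposal is correct and follows essentially the same route as the paper: invoke Lemma \ref{lem:2.1} for the inexact subgradient of $f_1$, absorb the error via $\varepsilon^{k+1}\leq\lambda\frac{\sigma_2}{2}\Vert \textbf{x}^{k+1}-\textbf{x}^k\Vert^2$ (the paper does this before summing with the $f_2$ inequality, you after — an immaterial difference), then add the exact $\sigma_2$-strong-convexity inequality for $f_2$ and apply the same Young-type bound and rearrangement as in Lemma \ref{prop:3.1}.
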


\begin{proof}
	Because $\textbf{x}^{k+1}$ satisfies (\ref{eq:3.2}), let $\textbf{y}^k \in \partial f_2(\textbf{x}^k)$ such that $\textbf{y}^k+\gamma(\textbf{x}^k-\textbf{x}^{k-1}) \in \partial_{\varepsilon^{k+1}} f_1(\textbf{x}^{k+1})$, we have
	\begin{align}
	\label{eq:9}
	f_1(\textbf{x}^k) \overset{\eqref{eq:00}}{\geq}& f_1(\textbf{x}^{k+1}) + \langle \textbf{y}^k + \gamma(\textbf{x}^k-\textbf{x}^{k-1}),\textbf{x}^k-\textbf{x}^{k+1}\rangle \nonumber\\
	&+\frac{\sigma_1(1-t)}{2}\Vert \textbf{x}^{k+1}-\textbf{x}^k\Vert^2 -\frac{\varepsilon^{k+1}}{t} \nonumber\\
	\geq& f_1(\textbf{x}^{k+1}) + \langle \textbf{y}^k + \gamma(\textbf{x}^k-\textbf{x}^{k-1}),\textbf{x}^k-\textbf{x}^{k+1}\rangle \\
	&+\frac{\sigma_1(1-t)-\lambda   \sigma_2/t}{2}\Vert \textbf{x}^{k+1}-\textbf{x}^k\Vert^2, \nonumber
	\end{align}
	where the second inequality is implied by the fact  $\varepsilon^{k+1}\leq \lambda\frac{\sigma_2}{2}\Vert \textbf{x}^{k+1}-\textbf{x}^k\Vert^2$. 
	On the other hand, $\textbf{y}^k \in \partial f_2(\textbf{x}^k)$ and $\sigma_2$-convexity of $f_2$ imply that 
	\begin{equation}
	\label{eq:10}
	f_2(\textbf{x}^{k+1}) \geq f_2(\textbf{x}^k) + \langle \textbf{y}^k,\textbf{x}^{k+1}-\textbf{x}^k \rangle +\frac{\sigma_2}{2}\Vert \textbf{x}^{k+1}-\textbf{x}^k\Vert^2.
	\end{equation}
	Summing (\ref{eq:9}) to (\ref{eq:10}) and reshuffling the terms, we obtain that 
	\begin{equation}
	\label{eq:11}
	f(\textbf{x}^k) \geq f(\textbf{x}^{k+1}) + \gamma\langle \textbf{x}^k-\textbf{x}^{k-1},\textbf{x}^k-\textbf{x}^{k+1}\rangle + \frac{\bar{\sigma}}{2}\Vert \textbf{x}^{k+1}-\textbf{x}^k\Vert^2.
	\end{equation}
	By applying $\langle \textbf{x}^k-\textbf{x}^{k-1},\textbf{x}^k-\textbf{x}^{k+1}\rangle \geq -(\Vert \textbf{x}^k-\textbf{x}^{k-1}\Vert^2 + \Vert \textbf{x}^k-\textbf{x}^{k+1}\Vert^2)/{2}$ to (\ref{eq:11}), we obtain (\ref{eq:prop3.2.1}). Moreover, $\gamma \in [0,\bar{\sigma}/2)$ implies $(\bar{\sigma} -2\gamma)/2> 0$, thus the sequence {\normalfont$\{f(\textbf{x}^k) + \frac{\bar{\sigma} -\gamma}{2}\Vert \textbf{x}^k-\textbf{x}^{k-1}\Vert^2\}$} is nonincreasing. \qed
\end{proof}

The proof of the next theorem is omitted since it follows from the similar arguments as in Theorem \ref{them:1}.
\begin{theorem}
	\label{them:2}
	Let {\normalfont$\{\textbf{x}^k\}$} be the sequence generated by \RInDCAine\ and $\gamma \in [0,\bar{\sigma}/2)$, where $\bar{\sigma} = \sigma_1(1-t) + \sigma_2 -\lambda \sigma_2/t$ with $\lambda\in(0,1)$, $t\in(0,1]$. Then the following statements hold:
	\begin{enumerate}
		\item[(i)] {\normalfont$\lim\limits_{k\rightarrow \infty}{\Vert \textbf{x}^k-\textbf{x}^{k-1}\Vert = 0}$};
		\item[(ii)] If {\normalfont$\{\textbf{x}^k\}$} is bounded, then any limit point of {\normalfont$\{\textbf{x}^k\}$} is a critical point of (\ref{P}). 
	\end{enumerate}
\end{theorem}

\subsection{Some results on the sequences obtained by RInDCA}
In this subsection, we provide some important results on the sequences obtained by \RInDCAe\ and \RInDCAine.
\begin{proposition}
	\label{prop:3.3}
	Suppose that the assumptions in Theorem \ref{them:1} hold. Let {\normalfont$\{\textbf{x}^k\}$} be the sequence generated by \RInDCAe\ and let $\Omega_1$ be the set of the limit points of {\normalfont$\{\textbf{x}^k\}$}. If {\normalfont$\{\textbf{x}^k\}$} is bounded, then we have
	\begin{itemize}
		\item[(i)] {\normalfont$\lim\limits_{k\rightarrow \infty} f(\textbf{x}^k):= \zeta$} exists;
		\item[(ii)]  $f \equiv \zeta$ on $\Omega_1$.
	\end{itemize}
\end{proposition}

\begin{proof}
	$(i)$ Lemma \ref{prop:3.1} implies that the sequence $\{f(\textbf{x}^k)+ \frac{\sigma_1+\sigma_2-\gamma}{2}\Vert \textbf{x}^k-\textbf{x}^{k-1}\Vert^2 \}$ is nonincreasing, combining the fact that this sequence is bounded below, thus it converges. On the other hand, (ii) of Theorem \ref{them:1} implies that $\lim\limits_{k\rightarrow \infty} \Vert \textbf{x}^k-\textbf{x}^{k-1}\Vert^2 = 0$. Therefore, $\lim\limits_{k\rightarrow \infty} f(\textbf{x}^k):=\zeta$ exists. \\
	$(ii)$ Because $\Omega_1$ is nonempty, thus given any $\bar{\textbf{x}}\in \Omega_1$, there exists a subsequence $\{\textbf{x}^{k_i}\}$ such that $\lim\limits_{i\rightarrow \infty} \textbf{x}^{k_i} = \bar{\textbf{x}}$. Then, we get from $\textbf{y}^{k_i} \in \partial f_2(\textbf{x}^{k_i})$ and $\sigma_2$-convexity of $f_2$ that 
	\begin{equation}
	\label{eq:001}
	    f_2(\textbf{x}^{k_i+1}) \geq f_2(\textbf{x}^{k_i}) + \langle \textbf{y}^{k_i} , \textbf{x}^{k_i+1}-\textbf{x}^{k_i} \rangle + \frac{\sigma_2}{2}\Vert \textbf{x}^{k_i+1}-\textbf{x}^{k_i}\Vert^2.
	\end{equation}
	Similarly, we obtain as well from $\textbf{y}^{k_i} + \gamma(\textbf{x}^{k_i}-\textbf{x}^{k_i-1}) \in \partial f_1(\textbf{x}^{k_i+1})$ and $\sigma_1$-convexity of $f_1$ that
	\begin{equation}
	\label{eq:002}
	f_1(\bar{\textbf{x}}) \geq f_1(\textbf{x}^{k_i+1}) + \langle \textbf{y}^{k_i} + \gamma(\textbf{x}^{k_i}-\textbf{x}^{k_i-1}),\bar{\textbf{x}} - \textbf{x}^{k_i+1} \rangle + \frac{\sigma_1}{2}\Vert \bar{\textbf{x}} - \textbf{x}^{k_i+1} \Vert^2.
	\end{equation}
	Then we have
	\begin{align*}
	\zeta =&\lim\limits_{k\rightarrow \infty} f(\textbf{x}^k) \\
	=& \lim\limits_{i\rightarrow \infty} f_1(\textbf{x}^{k_i+1})-f_2(\textbf{x}^{k_i+1}) \\
	\overset{\eqref{eq:001}}{\leq}& \limsup\limits_{i\rightarrow \infty} f_1(\textbf{x}^{k_i+1})-\{f_2(\textbf{x}^{k_i})+\langle \textbf{y}^{k_i},\textbf{x}^{k_i+1}-\textbf{x}^{k_i} \rangle +\frac{\sigma_2}{2}\Vert \textbf{x}^{k_i+1}-\textbf{x}^{k_i}\Vert^2\}\\
	\overset{\eqref{eq:002}}{\leq}& \limsup\limits_{i\rightarrow \infty} 
	f_1(\bar{\textbf{x}}) -\langle \textbf{y}^{k_i} +\gamma(\textbf{x}^{k_i}-\textbf{x}^{k_i-1}),\bar{\textbf{x}}-\textbf{x}^{k_i+1}\rangle - \frac{\sigma_1}{2}\Vert \textbf{x}^{k_i+1}-\bar{\textbf{x}}\Vert^2\\ & -f_2(\textbf{x}^{k_i})-\langle \textbf{y}^{k_i},\textbf{x}^{k_i+1}-\textbf{x}^{k_i}\rangle-\frac{\sigma_2}{2}\Vert \textbf{x}^{k_i+1}-\textbf{x}^{k_i}\Vert^2 \\
	 =& f(\bar{\textbf{x}}),
	\end{align*}
	where the last equality follows that $\lim\limits_{i\rightarrow \infty} \Vert \textbf{x}^{k_i}-\textbf{x}^{k_i-1}\Vert = 0$, $f_2$ is continuous over $\dom f_1$, and $\{\textbf{y}^{k_i}\}$ is bounded. 
	On the other hand, because $f$ is a closed function, then we have
	\begin{align*}
	f(\bar{\textbf{x}}) &= f_1(\bar{\textbf{x}}) - f_2(\bar{\textbf{x}}) \\
	&\leq \liminf\limits_{i\rightarrow \infty} f_1(\textbf{x}^{k_i}) - f_2(\textbf{x}^{k_i})\\
	&= \liminf\limits_{i\rightarrow \infty} f(\textbf{x}^{k_i}) =\zeta.
	\end{align*}
	Thus, we obtain that $f(\bar{\textbf{x}}) = \zeta$ and conclude that $f\equiv \zeta$ on $\Omega_1$. \qed
\end{proof}

We can obtain a similar proposition for \RInDCAine\ described as follows whose proof is omitted.
\begin{proposition}
\label{prop:3.2}
	Suppose that the assumptions in Theorem \ref{them:2} hold. Let {\normalfont$\{\textbf{x}^k\}$} be the sequence generated by \RInDCAine\ and let $\Omega_2$ be the set of the limit points of {\normalfont$\{\textbf{x}^k\}$}. If {\normalfont$\{\textbf{x}^k\}$} is bounded, then we have
	\begin{itemize}
		\item[(i)] {\normalfont$\lim\limits_{k\rightarrow \infty} f(\textbf{x}^k):= \widetilde{\zeta}$} exists;
		\item[(ii)]  $f \equiv\widetilde{\zeta}$ on $\Omega_2$.
	\end{itemize}
\end{proposition}

\section{Sequential convergence analysis}
\label{sec:5}
In this section, we will establish the sequential convergence of RInDCA. Let us consider the next auxiliary functions:
\begin{align}
	 \label{eq:3.2.01}
	     &E({\normalfont\textbf{x},\textbf{y},\textbf{z}}) = f_1({\normalfont\textbf{x}})  -\langle {\normalfont\textbf{x},\textbf{y}}\rangle + f_2^*({\normalfont\textbf{y}}) + \frac{\sigma_1- \gamma }{2}\Vert {\normalfont\textbf{x}-\textbf{z}}\Vert^2,\\
	     &\widetilde{E}(\vx,\vy,\vz) = f_1(\vx) - \langle \vx,\vy\rangle + f_2^*(\vy) + \frac{\bar{\sigma}-\sigma_2-\gamma}{2}\Vert \vx -\vz\Vert^2,
\end{align}
where $f_1$ and $f_2$ are defined in problem \eqref{P} with the strong convexity parameters $\sigma_1$ and $\sigma_2$, $f_2^*$ is the conjugate function of $f_2$, and $\bar{\sigma}$ is defined in Lemma \ref{lem:3.2}. Note that our construction of $E$ and $\widetilde{E}$ is motivated by \cite{liu2019refined}, and 
our sequential convergence analysis follows similar arguments as in \cite{attouch2013convergence,liu2019refined}.  

First, the following Proposition \ref{prop:4.1} establishes the non-increasing property of the sequence $\{E({\normalfont\textbf{x}}^k,{\normalfont\textbf{y}}^{k-1},{\normalfont\textbf{x}}^{k-1})\}_{k\geq 1}$. 

\begin{proposition} 
	\label{prop:4.1}
	 Suppose that the assumptions in Theorem \ref{them:1} hold. Let $E$ be defined as in \eqref{eq:3.2.01}, ${\normalfont\{\textbf{x}^k\}}$ be the sequence generated by \RInDCAe, and ${\normalfont\textbf{y}^k} \in \partial f_2({\normalfont\textbf{x}^k})$ for all $k\geq 0$. Then we have
	\begin{enumerate}
		\item[(i)] For any $k\geq 1$, 
		\begin{equation}
		\label{eq:de2}
		E({\normalfont\textbf{x}}^{k+1},{\normalfont\textbf{y}}^k,{\normalfont\textbf{x}}^k) \leq E({\normalfont\textbf{x}}^k,{\normalfont\textbf{y}}^{k-1},{\normalfont\textbf{x}}^{k-1}) - \frac{\sigma_1+\sigma_2-2\gamma}{2}\Vert {\normalfont\textbf{x}}^k-{\normalfont\textbf{x}}^{k-1}\Vert^2;
		\end{equation}
		\item[(ii)] If {\normalfont$\{\textbf{x}^k\}$} is bounded, let $\Upsilon$ be the set of all limit points of the sequence ${\normalfont\{(\textbf{x}^k,\textbf{y}^{k-1},\textbf{x}^{k-1})\}}$. Then $\Upsilon$ is a nonempty  compact set, $$\lim\limits_{k\rightarrow\infty} E({\normalfont\textbf{x}^k,\textbf{y}^{k-1},\textbf{x}^{k-1})}:=\zeta$$ exists, and $E\equiv \zeta$ on $\Upsilon$.
	\end{enumerate}
\end{proposition}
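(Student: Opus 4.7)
The overall strategy is to exploit the Fenchel–Young equality to bridge the Lyapunov function $E$ and the DC objective $f$, reducing everything to variants of the subgradient inequalities already used in Lemma~\ref{prop:3.1}.

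For (i), I would begin by rewriting $E(\vx^{k+1},\vy^k,\vx^k)$ using the fact that $\vy^k\in\partial f_2(\vx^k)$ gives $f_2^*(\vy^k)=\langle\vy^k,\vx^k\rangle-f_2(\vx^k)$, so that
\begin{equation*}
E(\vx^{k+1},\vy^k,\vx^k)=f_1(\vx^{k+1})-f_2(\vx^k)-\langle\vy^k,\vx^{k+1}-\vx^k\rangle+\tfrac{\sigma_1-\gamma}{2}\|\vx^{k+1}-\vx^k\|^2,
\end{equation*}
and identically for $E(\vx^k,\vy^{k-1},\vx^{k-1})$. I would then take the difference and upper-bound $f_1(\vx^{k+1})-f_1(\vx^k)$ by $\sigma_1$-convexity of $f_1$ with the subgradient $\vy^k+\gamma(\vx^k-\vx^{k-1})\in\partial f_1(\vx^{k+1})$, together with $f_2(\vx^{k-1})-f_2(\vx^k)$ via $\sigma_2$-convexity of $f_2$ at $\vx^{k-1}$ with subgradient $\vy^{k-1}$. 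The linear terms involving $\vy^k$ and $\vy^{k-1}$ cancel pairwise, leaving $\gamma\langle\vx^k-\vx^{k-1},\vx^{k+1}-\vx^k\rangle-\tfrac{\gamma}{2}\|\vx^{k+1}-\vx^k\|^2-\tfrac{\sigma_1+\sigma_2-\gamma}{2}\|\vx^k-\vx^{k-1}\|^2$. Absorbing the inner product via the AM–GM estimate $\langle\textbf{a},\textbf{b}\rangle\leq\tfrac{1}{2}(\|\textbf{a}\|^2+\|\textbf{b}\|^2)$ makes the $\|\vx^{k+1}-\vx^k\|$ terms cancel and yields exactly \eqref{eq:de2}.

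Part (ii) follows directly from Theorem~\ref{them:1}: boundedness of the augmented sequence is immediate from Theorem~\ref{them:1}(i), so $\Upsilon$ is a nonempty compact subset of $\R^n\times\R^n\times\R^n$; and Theorem~\ref{them:1}(ii) forces the first and third coordinates of any subsequential limit to coincide, giving $\bar{\vx}=\bar{\vz}$ with $\bar{\vx}\in\Omega_1$ by definition. For (iii), part (i) combined with $\sigma_1+\sigma_2-2\gamma>0$ yields monotonicity of $\{E(\vx^k,\vy^{k-1},\vx^{k-1})\}$; starting from the Fenchel–Young representation I would rewrite
\begin{equation*}
E(\vx^k,\vy^{k-1},\vx^{k-1})=f(\vx^k)+\bigl[f_2(\vx^k)-f_2(\vx^{k-1})\bigr]-\langle\vy^{k-1},\vx^k-\vx^{k-1}\rangle+\tfrac{\sigma_1-\gamma}{2}\|\vx^k-\vx^{k-1}\|^2,
\end{equation*}
and note that the last three terms vanish as $k\to\infty$ by Theorem~\ref{them:1}(ii), the boundedness of $\{\vy^{k-1}\}$, and the local Lipschitz continuity of $f_2$ on the bounded trajectory lying in the open set $\Omega$, while $f(\vx^k)\to\zeta$ by Proposition~\ref{prop:3.3}(i). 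This simultaneously shows the sequence is bounded below (so convergent) and identifies its limit as $\zeta$. Finally, for constancy on $\Upsilon$, given $(\bar{\vx},\bar{\vy},\bar{\vx})\in\Upsilon$ with a convergent subsequence $(\vx^{k_i},\vy^{k_i-1},\vx^{k_i-1})\to(\bar{\vx},\bar{\vy},\bar{\vx})$, closedness of the graph of $\partial f_2$ yields $\bar{\vy}\in\partial f_2(\bar{\vx})$; Fenchel–Young is then an equality, so $E(\bar{\vx},\bar{\vy},\bar{\vx})=f_1(\bar{\vx})-f_2(\bar{\vx})=f(\bar{\vx})=\zeta$ by Proposition~\ref{prop:3.3}(ii).

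The main obstacle is the algebraic bookkeeping in part (i): the Fenchel–Young substitution must be applied at \emph{both} time indices so that the $\vy^k$ and $\vy^{k-1}$ cross-terms cancel exactly, and the various quadratic coefficients must collapse to precisely $-(\sigma_1+\sigma_2-2\gamma)/2$—this is exactly what makes $E$, rather than a naive Lyapunov candidate built from $f$ and $\|\vx^{k+1}-\vx^k\|^2$ alone, the right object for the upcoming KL-based sequential convergence argument. A secondary technical point in (iii) is justifying the local Lipschitz continuity of $f_2$ on the bounded trajectory, which rests on Assumption~\ref{intro:assump1}(a) placing $\{\vx^k\}$ inside the open convex set $\Omega\subseteq\dom f_2$.
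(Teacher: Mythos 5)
Your proof is correct. Parts (i) and (ii) follow the paper's own argument essentially verbatim: the paper likewise combines the Fenchel--Young equalities $f_2^*(\vy^k)=\langle\vy^k,\vx^k\rangle-f_2(\vx^k)$ and $f_2^*(\vy^{k-1})=\langle\vy^{k-1},\vx^{k-1}\rangle-f_2(\vx^{k-1})$ with the $\sigma_1$-convexity inequality at $\vx^{k+1}$ (subgradient $\vy^k+\gamma(\vx^k-\vx^{k-1})$), the $\sigma_2$-convexity inequality at $\vx^{k-1}$ (subgradient $\vy^{k-1}$), and the same AM--GM bound, only written as a single chain of inequalities rather than as a difference; (ii) is identical. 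Where you genuinely diverge is (iii). The paper first proves a lower bound $E(\vx^k,\vy^{k-1},\vx^{k-1})\geq f^*+\frac{\sigma_1-\gamma}{2}\Vert\vx^k-\vx^{k-1}\Vert^2$ via the Fenchel--Young \emph{inequality} and invokes monotonicity from (i) to get existence of the limit, and then establishes $E\equiv\zeta$ on $\Upsilon$ through a $\limsup$ chain (again using $\sigma_1$-convexity of $f_1$) bounding $\lim_k E$ by $f(\bar{\vx})\leq E(\bar{\vx},\bar{\vy},\bar{\vx})$, with the reverse inequality supplied by closedness (lower semicontinuity) of $E$. You instead rewrite $E(\vx^k,\vy^{k-1},\vx^{k-1})=f(\vx^k)+[f_2(\vx^k)-f_2(\vx^{k-1})]-\langle\vy^{k-1},\vx^k-\vx^{k-1}\rangle+\frac{\sigma_1-\gamma}{2}\Vert\vx^k-\vx^{k-1}\Vert^2$ and let the trailing terms vanish, so that $\lim_k E=\lim_k f(\vx^k)=\zeta$ directly, and you get constancy on $\Upsilon$ from the closed graph of $\partial f_2$ plus Fenchel--Young equality at the limit point, dispensing with the lsc-of-$E$ step entirely; this is shorter and makes the identity $\lim E=\lim f$ explicit. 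The price is that $f_2(\vx^k)-f_2(\vx^{k-1})\to 0$ needs uniform continuity (or Lipschitz continuity) of $f_2$ on the closure of the bounded trajectory, which requires that closure to lie in $\Omega$; this does hold, but only because the limit points are critical by Theorem \ref{them:1}(iii), hence belong to $\dom\partial f_1\subseteq\dom f_1\subseteq\Omega$ — a step you flag but should state explicitly. Since the paper's own Proposition \ref{prop:3.3} already uses continuity of $f_2$ over $\dom f_1$ in the same spirit, this is a minor omission rather than a gap; the paper's route, by contrast, only needs lsc of $E$ and is slightly more robust on this boundary issue.
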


\begin{proof}
	$(i)$ For any $k\geq 1$, $\textbf{y}^{k-1}\in \partial f_2(\textbf{x}^{k-1})$ together with $\sigma_2$-convexity of $f_2$ implies that 
	\begin{equation}
	\label{eq:200}
	f_2(\textbf{x}^k)\geq f_2(\textbf{x}^{k-1})+\langle \textbf{y}^{k-1},\textbf{x}^k-\textbf{x}^{k-1}\rangle + \frac{\sigma_2}{2}\Vert \textbf{x}^k-\textbf{x}^{k-1}\Vert^2.
	\end{equation}
	Thus we have
	\begin{align}
	\label{eq:3.5.10}
	\langle \textbf{y}^{k-1},\textbf{x}^k\rangle - f_2(\textbf{x}^k) + \frac{\sigma_2}{2}\Vert \textbf{x}^k-\textbf{x}^{k-1}\Vert^2 &\overset{\eqref{eq:200}}{\leq} \langle  \textbf{y}^{k-1},\textbf{x}^{k-1}\rangle - f_2(\textbf{x}^{k-1}) \nonumber\\
	&\ = f_2^*(\textbf{y}^{k-1}),
	\end{align}
	where the equality is implied by $\textbf{y}^{k-1}\in\partial f_2(\textbf{x}^{k-1})$, and thus $\langle \textbf{y}^{k-1},\textbf{x}^{k-1}\rangle = f_2(\textbf{x}^{k-1}) + f_2^*(\textbf{y}^{k-1})$ (see Theorem 4.20 in \cite{Beck_1order}). Moreover, $\textbf{y}^k +\gamma(\textbf{x}^k-\textbf{x}^{k-1}) \in\partial f_1(\textbf{x}^{k+1})$ and $\sigma_1$-convexity of $f_1$ imply that
    \begin{equation}
    \label{eq:3.5.11}
    f_1(\textbf{x}^k) \geq  f_1(\textbf{x}^{k+1}) + \langle \textbf{y}^k +\gamma(\textbf{x}^k-\textbf{x}^{k-1}),\textbf{x}^k-\textbf{x}^{k+1} \rangle + \frac{\sigma_1}{2}\Vert \textbf{x}^k-\textbf{x}^{k+1}\Vert^2.
    \end{equation}	
	Next, we have
	\begin{align*}
	&E(\textbf{x}^{k+1},\textbf{y}^k,\textbf{x}^k) \\
	=& f_1(\textbf{x}^{k+1})-\langle \textbf{x}^{k+1},\textbf{y}^k\rangle + f_2^*(\textbf{y}^k) + \frac{\sigma_1-\gamma}{2}\Vert \textbf{x}^{k+1}-\textbf{x}^k\Vert^2 \\
	\overset{\eqref{eq:3.5.11}}{\leq}& f_1(\textbf{x}^k) - \langle \textbf{y}^k + \gamma(\textbf{x}^k-\textbf{x}^{k-1}),\textbf{x}^k-\textbf{x}^{k+1}\rangle -\frac{\sigma_1}{2}\Vert \textbf{x}^{k+1}-\textbf{x}^k\Vert^2 \\
	& -\langle \textbf{x}^{k+1},\textbf{y}^k\rangle + f_2^*(\textbf{y}^k) + \frac{\sigma_1-\gamma}{2}\Vert \textbf{x}^{k+1}-\textbf{x}^k\Vert^2 \\
	=& f_1(\textbf{x}^k) - \langle \textbf{y}^k + \gamma(\textbf{x}^k-\textbf{x}^{k-1}),\textbf{x}^k-\textbf{x}^{k+1}\rangle -\frac{\sigma_1}{2}\Vert \textbf{x}^{k+1}-\textbf{x}^k\Vert^2 \\
	& -\langle \textbf{x}^{k+1},\textbf{y}^{k}\rangle +\langle \textbf{y}^k,\textbf{x}^k\rangle -f_2(\textbf{x}^k) + \frac{\sigma_1-\gamma}{2}\Vert \textbf{x}^{k+1}-\textbf{x}^k\Vert^2 \\
	=& f_1(\textbf{x}^k) - f_2(\textbf{x}^k) - \langle \gamma(\textbf{x}^k-\textbf{x}^{k-1}),\textbf{x}^k-\textbf{x}^{k+1}\rangle-\frac{\gamma }{2}\Vert \textbf{x}^{k+1}-\textbf{x}^k\Vert^2 \\
	\leq & f_1(\textbf{x}^k) - f_2(\textbf{x}^k) +  \frac{\gamma}{2}\Vert \textbf{x}^k-\textbf{x}^{k-1}\Vert^2 \\
	\overset{\eqref{eq:3.5.10}}{\leq}& f_1(\textbf{x}^k) -\langle \textbf{x}^k,\textbf{y}^{k-1}\rangle + f_2^*(\textbf{y}^{k-1})  + \frac{\gamma-\sigma_2}{2}\Vert \textbf{x}^k-\textbf{x}^{k-1}\Vert^2  \\
	=& E(\textbf{x}^k,\textbf{y}^{k-1},\textbf{x}^{k-1}) - \frac{\sigma_1+\sigma_2-2\gamma}{2}\Vert \textbf{x}^{k}-\textbf{x}^{k-1}\Vert^2,
	\end{align*}
	where the second equality is implied by $\textbf{y}^k \in \partial f_2(\textbf{x}^k)$, and thus $f_2(\textbf{x}^k) + f_2^*(\textbf{y}^k) = \langle \textbf{x}^k,\textbf{y}^k\rangle$; the second inequality follows from $\langle \textbf{x}^k-\textbf{x}^{k-1},\textbf{x}^k-\textbf{x}^{k+1}\rangle \geq -(\Vert \textbf{x}^k-\textbf{x}^{k-1}\Vert^2 + \Vert \textbf{x}^k-\textbf{x}^{k+1}\Vert^2)/2$.\\
	$(ii)$ The boundedness of $\{\textbf{x}^k\}$ implies the boundedness of $\{\textbf{y}^{k-1}\}$. Thus, the sequence $\{(\textbf{x}^k,\textbf{y}^{k-1},\textbf{x}^{k-1})\}$ is bounded, yielding that $\Upsilon$ is a nonempty  compact set. Let ${\normalfont(\bar{\textbf{x}},\bar{\textbf{y}},\bar{\textbf{z}})} \in \Upsilon$, it is easy to see that $(\bar{\textbf{x}},\bar{\textbf{y}},\bar{\textbf{z}}) \in \Upsilon$ implies $\bar{\textbf{x}} = \bar{\textbf{z}} \in \Omega_1$.\\
	For any $k\geq 1$, we have
	\begin{align*}
	    E(\textbf{x}^k,\textbf{y}^{k-1},\textbf{x}^{k-1}) &= f_1({\normalfont\textbf{x}^k})  -\langle {\normalfont\textbf{x}^k,\textbf{y}^{k-1}}\rangle + f_2^*({\normalfont\textbf{y}^{k-1}}) + \frac{\sigma_1- \gamma }{2}\Vert {\normalfont\textbf{x}^k-\textbf{x}^{k-1}}\Vert^2 \\
	    &\geq f_1(\textbf{x}^k) - f_2(\textbf{x}^k) + \frac{\sigma_1- \gamma }{2}\Vert {\normalfont\textbf{x}^k-\textbf{x}^{k-1}}\Vert^2 \\
	    &\geq f^*  + \frac{\sigma_1- \gamma }{2}\Vert {\normalfont\textbf{x}^k-\textbf{x}^{k-1}}\Vert^2,
	\end{align*}
	where the first inequality follows from $f_2(\textbf{x}^k) + f_2^*(\textbf{y}^{k-1}) \geq \langle \textbf{x}^k,\textbf{y}^{k-1}\rangle$ and $f^*$ is the lower bound of $f$. Thus, $\liminf\limits_{k\rightarrow\infty} E(\textbf{x}^k,\textbf{y}^{k-1},\textbf{x}^{k-1}) \geq f^*$. Then, combining the fact that the sequence $\{E({\normalfont\textbf{x}}^k,{\normalfont\textbf{y}}^{k-1},{\normalfont\textbf{x}}^{k-1})\}$ is nonincreasing (statement (i)), we obtain the existence of
	$\lim\limits_{k\rightarrow\infty} E({\normalfont\textbf{x}^k,\textbf{y}^{k-1},\textbf{x}^{k-1})}$. Next, we prove the last part of (ii). Given any $(\bar{\textbf{x}},\bar{\textbf{y}},\bar{\textbf{x}}) \in \Upsilon$, there exists a subsequence $(\textbf{x}^{k_i},\textbf{y}^{k_i-1},\textbf{x}^{k_i-1})$ such that $$\lim\limits_{i\rightarrow\infty} \Vert (\textbf{x}^{k_i},\textbf{y}^{k_i-1},\textbf{x}^{k_i-1})-(\bar{\textbf{x}},\bar{\textbf{y}},\bar{\textbf{x}})\Vert = 0.$$
	Then we have
	\begin{align*}
	&\lim\limits_{k\rightarrow\infty}  E(\textbf{x}^k,\textbf{y}^{k-1},\textbf{x}^{k-1}) \\ =& \lim\limits_{i\rightarrow\infty} E(\textbf{x}^{k_i+1},\textbf{y}^{k_i},\textbf{x}^{k_i}) \\
	=& \lim\limits_{i\rightarrow \infty} f_1(\textbf{x}^{k_i+1}) - \langle \textbf{x}^{k_i+1},\textbf{y}^{k_i}\rangle+f_2^*(\textbf{y}^{k_i}) + \frac{\sigma_1-\gamma}{2}\Vert \textbf{x}^{k_i+1}-\textbf{x}^{k_i}\Vert^2 \\
	=& \lim\limits_{i\rightarrow \infty}
	f_1(\textbf{x}^{k_i+1}) - \langle \textbf{x}^{k_i+1},\textbf{y}^{k_i}\rangle - f_2(\textbf{x}^{k_i}) + \langle \textbf{y}^{k_i},\textbf{x}^{k_i}\rangle + \frac{\sigma_1-\gamma}{2}\Vert \textbf{x}^{k_i+1}-\textbf{x}^{k_i}\Vert^2 \\
	\leq& \limsup\limits_{i\rightarrow \infty} f_1(\textbf{x}^{k_i}) -\langle \textbf{y}^{k_i} +\gamma(\textbf{x}^{k_i}-\textbf{x}^{k_i-1}),\textbf{x}^{k_i}-\textbf{x}^{k_i+1}\rangle - \frac{\sigma_1}{2}\Vert \textbf{x}^{k_i+1}-\textbf{x}^{k_i}\Vert^2  \\
	& - \langle \textbf{x}^{k_i+1},\textbf{y}^{k_i}\rangle - f_2(\textbf{x}^{k_i}) + \langle \textbf{y}^{k_i},\textbf{x}^{k_i}\rangle + \frac{\sigma_1-\gamma}{2}\Vert \textbf{x}^{k_i+1}-\textbf{x}^{k_i}\Vert^2 \\
	=& \limsup\limits_{i\rightarrow\infty} f(\textbf{x}^{k_i}) -\gamma\langle \textbf{x}^{k_i}-\textbf{x}^{k_i-1},\textbf{x}^{k_i}-\textbf{x}^{k_i+1}\rangle - \frac{\gamma}{2}\Vert \textbf{x}^{k_i+1}-\textbf{x}^{k_i}\Vert^2\\
	=& \limsup\limits_{i\rightarrow\infty} f(\textbf{x}^{k_i}) \\
	=& \zeta \\
	=& f(\bar{\textbf{x}}) \leq  E(\bar{\textbf{x}},\bar{\textbf{y}},\bar{\textbf{x}}),
	\end{align*} 
	where the third equality follows from $\textbf{y}^{k_i} \in \partial f_2(\textbf{x}^{k_i})$ and thus $f_2(\textbf{x}^{k_i}) + f_2^*(\textbf{y}^{k_i}) = \langle \textbf{y}^{k_i},\textbf{x}^{k_i}\rangle$; the first inequality is implied by $\textbf{y}^{k_i} + \gamma (\textbf{x}^{k_i}-\textbf{x}^{k_i-1}) \in \partial f_1(\textbf{x}^{k_i+1})$ and $\sigma_1$-convexity of $f_1$; the fifth equality follows from $\lim\limits_{i\rightarrow\infty} \Vert \textbf{x}^{k_i}-\textbf{x}^{k_i-1}\Vert=0$. 
	On the other hand, the closedness of $E$ implies that $E(\bar{\textbf{x}},\bar{\textbf{y}},\bar{\textbf{x}}) \leq \zeta.$
	Thus, we obtain that $E(\bar{\textbf{x}},\bar{\textbf{y}},\bar{\textbf{x}}) = \zeta$ and conclude that $E\equiv \zeta$ on $\Upsilon$.\qed
\end{proof}

Next, we start to create the sequential convergence of \RInDCAe\ based on the KL property of $E$ and Proposition \ref{prop:4.1}. 
Note that the KL property of $E$ can be implied by the KL property of $f$ (see \cite{liu2019refined}).

\begin{theorem}
	Suppose that the assumptions in Proposition \ref{prop:4.1} hold. Let $E$ defined in \eqref{eq:3.2.01} be a KL function, ${\normalfont\{\textbf{x}^k\}}$ be the sequence generated by \RInDCAe, and ${\normalfont\textbf{y}^k} \in \partial f_2({\normalfont\textbf{x}^k})$ for all $k\geq 0$. Then we have
	\begin{itemize}
	    \item[(i)] There exists $D >0$ such that $\forall k\geq 1$,  
		\begin{equation}
		\label{eq:them001}
		{\normalfont\text{dist}}({\normalfont\zero},\partial E({\normalfont\textbf{x}^k,\textbf{y}^{k-1},\textbf{x}^{k-1})}) \leq D(\Vert {\normalfont\textbf{x}^k-\textbf{x}^{k-1}}\Vert+\Vert {\normalfont\textbf{x}^{k-1}-\textbf{x}^{k-2}}\Vert).
		\end{equation}
	    \item[(ii)] If ${\normalfont\{\textbf{x}^k\}}$ is bounded, then ${\normalfont\{\textbf{x}^k\}}$ converges to a critical point of problem (\ref{P}).
	\end{itemize}
\end{theorem}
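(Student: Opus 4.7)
I would first compute the (limiting) subdifferential of $E$ in separate variables:
\begin{align*}
\partial_{\textbf{x}} E(\textbf{x},\textbf{y},\textbf{z}) &= \partial f_1(\textbf{x}) - \textbf{y} + (\sigma_1-\gamma)(\textbf{x}-\textbf{z}),\\
\partial_{\textbf{y}} E(\textbf{x},\textbf{y},\textbf{z}) &= -\textbf{x} + \partial f_2^*(\textbf{y}),\\
\partial_{\textbf{z}} E(\textbf{x},\textbf{y},\textbf{z}) &= -(\sigma_1-\gamma)(\textbf{x}-\textbf{z}).
\end{align*}
Evaluating at $(\textbf{x}^k,\textbf{y}^{k-1},\textbf{x}^{k-1})$ and using the inclusion $\textbf{y}^{k-1}+\gamma(\textbf{x}^{k-1}-\textbf{x}^{k-2})\in\partial f_1(\textbf{x}^k)$ from \eqref{eq:3.1} for the $\textbf{x}$-component, and the standard convex-conjugate identity $\textbf{x}^{k-1}\in\partial f_2^*(\textbf{y}^{k-1})$ (since $\textbf{y}^{k-1}\in\partial f_2(\textbf{x}^{k-1})$) for the $\textbf{y}$-component, I obtain an explicit element
\[
\bigl(\gamma(\textbf{x}^{k-1}-\textbf{x}^{k-2})+(\sigma_1-\gamma)(\textbf{x}^k-\textbf{x}^{k-1}),\; \textbf{x}^{k-1}-\textbf{x}^k,\; -(\sigma_1-\gamma)(\textbf{x}^k-\textbf{x}^{k-1})\bigr)\in \partial E(\textbf{x}^k,\textbf{y}^{k-1},\textbf{x}^{k-1}).
\]
Taking norms and using the triangle inequality yields \eqref{eq:them001} with $D=\max\{\gamma,|\sigma_1-\gamma|\}+1+|\sigma_1-\gamma|$ (or a similar explicit constant).

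\textbf{Plan for part (ii): setting up the KL argument.} Set $\Delta_k:=\Vert\textbf{x}^k-\textbf{x}^{k-1}\Vert$, $e_k:=E(\textbf{x}^k,\textbf{y}^{k-1},\textbf{x}^{k-1})-\zeta$ and $C:=(\sigma_1+\sigma_2-2\gamma)/2>0$. By Proposition~\ref{prop:4.1}(i), $e_{k+1}\le e_k-C\Delta_k^2$, so $\{e_k\}$ is nonincreasing, nonnegative (by Proposition~\ref{prop:4.1}(iii)) and tends to $0$. If $e_{k_0}=0$ for some $k_0$, the descent inequality forces $\Delta_k=0$ for all $k\ge k_0$ and convergence is immediate; so I may assume $e_k>0$ for every $k$. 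By Proposition~\ref{prop:4.1}(ii)--(iii), $\Upsilon$ is compact and $E\equiv\zeta$ on $\Upsilon$; combined with $\text{dist}((\textbf{x}^k,\textbf{y}^{k-1},\textbf{x}^{k-1}),\Upsilon)\to 0$ and the KL hypothesis on $E$, Lemma~\ref{lem:UKL} applies for all sufficiently large $k$.

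\textbf{Plan for part (ii): summability and Cauchy property.} The uniformized KL inequality combined with part~(i) gives, for $k$ large,
\[
\varphi'(e_k)\;\ge\; \frac{1}{\text{dist}(\zero,\partial E(\textbf{x}^k,\textbf{y}^{k-1},\textbf{x}^{k-1}))} \;\ge\; \frac{1}{D(\Delta_k+\Delta_{k-1})}.
\]
Concavity of $\varphi$ yields $\varphi(e_k)-\varphi(e_{k+1})\ge \varphi'(e_k)(e_k-e_{k+1})\ge \varphi'(e_k)\,C\Delta_k^2$, so
\[
\Delta_k^2 \;\le\; \frac{D}{C}(\Delta_k+\Delta_{k-1})\bigl(\varphi(e_k)-\varphi(e_{k+1})\bigr).
\]
Applying $\sqrt{ab}\le\tfrac14(a+b)+b$ type Young inequality (with the weight chosen so that the coefficient of $\Delta_k+\Delta_{k-1}$ on the right is $\tfrac14$), summing from $k=K_0$ to $N$, using telescoping in $\varphi(e_k)$, and absorbing the $\tfrac12\sum\Delta_k$ term on the left, produces
\[
\sum_{k=K_0}^{N}\Delta_k \;\le\; \Delta_{K_0-1} + \frac{2D}{C}\,\varphi(e_{K_0}),
\]
uniformly in $N$. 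Hence $\sum_k\Delta_k<\infty$, so $\{\textbf{x}^k\}$ is Cauchy and converges to some $\bar{\textbf{x}}$; by Theorem~\ref{them:1}(iii), $\bar{\textbf{x}}$ is a critical point of \eqref{P}.

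\textbf{Main obstacle.} The delicate step is the KL/Young-inequality bookkeeping in the last paragraph: I must choose the splitting of $\sqrt{(\Delta_k+\Delta_{k-1})(\varphi(e_k)-\varphi(e_{k+1}))}$ so that the shift of index in $\Delta_{k-1}$ only contributes $\tfrac12\sum\Delta_k$ (which can be absorbed on the left) rather than the full sum. Everything else is a direct application of earlier lemmas and propositions in the paper, plus the uniformized KL property (Lemma~\ref{lem:UKL}).
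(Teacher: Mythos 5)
Your proposal is correct and follows essentially the same route as the paper: the same explicit element of $\partial E(\vx^k,\vy^{k-1},\vx^{k-1})$ built from the inclusions $\vy^{k-1}+\gamma(\vx^{k-1}-\vx^{k-2})\in\partial f_1(\vx^k)$ and $\vx^{k-1}\in\partial f_2^*(\vy^{k-1})$, then the uniformized KL property on the compact set $\Upsilon$, the concavity inequality for $\varphi$, and the same square-root/Young splitting with weight $\tfrac14$ so that the shifted term is absorbed and $\sum_k\Vert\vx^k-\vx^{k-1}\Vert<\infty$ follows by telescoping, giving the Cauchy property and criticality of the limit via Theorem \ref{them:1}(iii). (Minor remark: your coefficient $(\sigma_1-\gamma)(\vx-\vz)$ for the quadratic term is the correct derivative of $E$ as defined in \eqref{eq:3.2.01}, whereas the paper writes $\tfrac{\gamma-\sigma_2}{2}(\vx^k-\vx^{k-1})$ at that point, apparently a typo; either way the bound \eqref{eq:them001} holds for some $D>0$.)
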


\begin{proof}
    $(i)$  Note that the subdifferential of $E$ at $\vw^k :=(\textbf{x}^k,\textbf{y}^{k-1},\textbf{x}^{k-1})$ is:
	\begin{equation*}
	\partial E(\vw^k) = 
	\begin{bmatrix}
	\partial f_1(\textbf{x}^k)-\textbf{y}^{k-1} + \frac{\gamma-\sigma_2}{2}(\textbf{x}^k-\textbf{x}^{k-1})	 \\ 
	-\textbf{x}^k + \partial f_2^*(\textbf{y}^{k-1})	\\
	-\frac{\gamma-\sigma_2}{2}(\textbf{x}^k-\textbf{x}^{k-1})   \\
	\end{bmatrix}.
	\end{equation*}
	Because $\textbf{y}^{k-1}\in \partial f_2(\textbf{x}^{k-1})$, thus $\textbf{x}^{k-1} \in \partial f_2^*(\textbf{y}^{k-1})$; besides, we have $\textbf{y}^{k-1} + \gamma (\textbf{x}^{k-1}-\textbf{x}^{k-2}) \in \partial f_1(\textbf{x}^k)$. Combining these relations, we have 
	\begin{equation*}
	\begin{bmatrix}
	\gamma(\textbf{x}^{k-1}-\textbf{x}^{k-2}) +  \frac{\gamma-\sigma_2}{2}(\textbf{x}^k-\textbf{x}^{k-1})	 \\ 
	-\textbf{x}^k + \textbf{x}^{k-1}	\\
	-\frac{\gamma-\sigma_2}{2}(\textbf{x}^k-\textbf{x}^{k-1})   \\
	\end{bmatrix} \in \partial E(\vw^k). 
	\end{equation*}
	Thus, it is easy to see that there exists $D >0$ such that $\forall k\geq 1$,
	 \begin{equation*}
	 \text{dist}(\zero,\partial E(\textbf{x}^k,\textbf{y}^{k-1},\textbf{x}^{k-1})) \leq D(\Vert \textbf{x}^k-\textbf{x}^{k-1}\Vert+\Vert \textbf{x}^{k-1}-\textbf{x}^{k-2}\Vert).
	 \end{equation*}\\
	 $(ii)$ It is sufficient to prove that $\sum_{k=1}^{\infty} \Vert \textbf{x}^k-\textbf{x}^{k-1}\Vert < \infty$. If there exists $N_0 \geq 1$ such that $E(\vw^{N_0})=\zeta_1$, then the inequality \eqref{eq:de2} implies that $\vx^k = \vx^{N_0}, \forall k\geq N_0$; naturally, the sequence $\{\vx^k\}$ is convergent. Thus, we can assume that $$E(\vw^k) > \zeta, \forall k\geq 1.$$ Then, it follows from $\Upsilon$ is a compact set, $ \Upsilon \subseteq \dom \partial E$ and $E\equiv \zeta$ on $\Upsilon$ that there exists  $\varepsilon >0$, $\eta >0$ and $\varphi\in\Xi_{\eta}$ such that 
	\begin{equation*}
	\varphi'(E(\textbf{x},\textbf{y},\textbf{z})-\zeta) \text{dist}(\zero,\partial E(\textbf{x},\textbf{y},\textbf{z})) \geq 1
	\end{equation*} 
	for all $(\textbf{x},\textbf{y},\textbf{z}) \in U$ with
	\begin{equation*}
	U = \{(\textbf{x},\textbf{y},\textbf{z}):\text{dist}((\textbf{x},\textbf{y},\textbf{z}),\Upsilon)<\varepsilon \} \cap \{(\textbf{x},\textbf{y},\textbf{z}):\zeta <E(\textbf{x},\textbf{y},\textbf{z})<\zeta +\eta\}.
	\end{equation*}
	Moreover, it is easy to derive from $\lim\limits_{k\rightarrow\infty}\text{dist}(\vw^k,\Upsilon) = 0$ and $\lim\limits_{k\rightarrow\infty}E(\vw^k) = \zeta$ that there exists $N\geq 1$ such that $\vw^k \in U, \forall k\geq N$. Thus, 
	\begin{equation*}
	\varphi'(E(\vw^k)-\zeta) \text{dist}(\textbf{0},\partial E(\vw^k) \geq 1,~ \forall k\geq N.
	\end{equation*} 
	Using the concavity of $\varphi$, we see that $\forall k\geq N$,
	\begin{align}\label{eq:thm:4.2_01}
	& [\varphi(E(\vw^k)-\zeta)-\varphi(E(\vw^{k+1})-\zeta)]
	\text{dist}(\zero,\partial E(\vw^k))\nonumber\\
	\geq& \varphi'(E(\vw^k)-\zeta) \text{dist}(\zero,\partial E(\vw^k)) (E(\vw^k)-E(\vw^{k+1}))\nonumber\\
	\geq &   E(\vw^k)-E(\vw^{k+1}).
	\end{align}
	Let $\Delta_k := \varphi(E(\vw^k)-\zeta)-\varphi(E(\vw^{k+1})-\zeta)$ and $C:= \frac{\sigma_1+\sigma_2-2\gamma}{2}\Vert \textbf{x}^k-\textbf{x}^{k-1}\Vert^2$. Then combining \eqref{eq:thm:4.2_01}, \eqref{eq:them001} and (i) of Proposition \ref{prop:3.2}, we have 
	\begin{equation*}
	\Vert \textbf{x}^k-\textbf{x}^{k-1}\Vert^2 \leq \frac{D}{C}\Delta_k(\Vert \textbf{x}^k-\textbf{x}^{k-1}\Vert+\Vert \textbf{x}^{k-1}+\textbf{x}^{k-2}\Vert).
	\end{equation*}
	Thus, taking square roots on both sides, we obtain
	\begin{align*}
	\Vert \textbf{x}^k-\textbf{x}^{k-1}\Vert &\leq \sqrt{\frac{2D}{C}\Delta_k} \sqrt{\frac{\Vert \textbf{x}^k-\textbf{x}^{k-1}\Vert+\Vert \textbf{x}^{k-1}-\textbf{x}^{k-2}\Vert}{2}} \\
	&\leq \frac{D}{C}\Delta_k + \frac{1}{4}\Vert \textbf{x}^k-\textbf{x}^{k-1}\Vert + \frac{1}{4}\Vert \textbf{x}^{k-1}-\textbf{x}^{k-2}\Vert,
	\end{align*}
	this yields
	\begin{equation*}
	\frac{1}{2}\Vert \textbf{x}^k-\textbf{x}^{k-1}\Vert \leq \frac{D}{C}\Delta_k + \frac{1}{4}\Vert \textbf{x}^{k-1}-\textbf{x}^{k-2}\Vert - \frac{1}{4}\Vert \textbf{x}^k - \textbf{x}^{k-1}\Vert.
	\end{equation*}
	Considering also that $\sum_{k=1}^{\infty} \Delta_k < \infty$, thus the above inequality implies that $\sum_{k=1}^{\infty} \Vert \textbf{x}^k-\textbf{x}^{k-1}\Vert < \infty$, which indicates that $\{\vx^k\}$ is a Cauchy sequence, and thus convergent. The proof is completed. \qed
\end{proof}

The sequential convergence of \RInDCAine\ can be similarly developed based on the KL property of $\widetilde{E}$, Proposition \ref{prop:3.2}, and Proposition \ref{prop:4.1} for $\widetilde{E}$, thus the proof is omitted.

\section{Numerical results}\label{sec:6}
In this part, we conduct numerical simulations on testing copositivity of matrices (constrained case) and image denoising problem (unconstrained case). All experiments are implemented in Matlab 2019a on a 64-bit PC with an Intel(R) Core(TM) i5-6200U CPU (2.30GHz) and 8GB of RAM. 

\subsection{Checking copositivity of matrices}
Recall that a symmetric matrix $\textbf{A} \in \R^{n\times n}$ is called copositive if $\textbf{x}^{\top}\textbf{A}\textbf{x} \geq 0$ for all $\textbf{x}\geq 0$, and called non-copositive otherwise. Consider the next formulation
\begin{equation}
\label{copo}
\tag{COPO}
\begin{split}
\min\quad & \frac{1}{2}\textbf{x}^{\top}\textbf{A}\textbf{x} \\
\text{s.t.}\quad  & \textbf{x}\in \R_{+}^n.\\
\end{split}
\end{equation}
Clearly, $\textbf{A}$ is copositive if and only if the optimal value of problem (\ref{copo}) is nonnegative, and problem (\ref{copo}) is equivalent to the next DC program
\begin{equation}
\label{DC_COPO}
\tag{DC$_{copo}$}
    f(\textbf{x}) = \underbrace{\frac{L}{2}\Vert \textbf{x}\Vert^2 + \delta_{\R_+^n}(\textbf{x})}_{f_1(\textbf{x})} - (\underbrace{\frac{L}{2}\Vert \textbf{x}\Vert^2 - \frac{1}{2}\textbf{x}^{\top}\textbf{A}\textbf{x}}_{f_2(\textbf{x})}),
\end{equation}
where $L\geq \Vert \textbf{A}\Vert$.
In \cite{JP_copo}, DCA applied for the above DC decomposition 
is used to check the copositivity of $\textbf{A}$. For our experiments, the same instances of $\textbf{A}$ as in \cite{JP_copo} will be used. A brief introduction about the  instances is given as follows. Let $\textbf{E}_n\in \R^{n\times n}$ be the matrix with all entries being one. The matrix $\textbf{A}_{\text{cycle}} = (a_{ij})\in \R^{n\times n}$ is given component-wise by
\begin{equation*}
a_{ij} :=
\begin{cases}
1 & \text{if $\vert i-j\vert \in \{1,n-1\}$},\\
0 & \text{otherwise}.
\end{cases}
\end{equation*}
It was known that the following matrix
\begin{equation*}
\textbf{Q}_n^{\mu} := \mu (\textbf{E}_n-\textbf{A}_{\text{cycle}}) - \textbf{E}_n \in \R^{n\times n}
\end{equation*}
is copositive for $\mu \geq 2$ and non-copositive for $\mu < 2$. When $\mu = 2$, $\textbf{Q}_n^{\mu}$ is called Horn matrix, denoted as $\textbf{H}_n$.

\redtext{In order to apply DCA, \InDCAe, and \RInDCAe\ for \eqref{DC_COPO}, we set $L = \Vert\textbf{A}\Vert$ for DCA and $L = \Vert\textbf{A}\Vert+1$ for \InDCAe\ and \RInDCAe.} The inertial ranges of \InDCAe\ and \RInDCAe\ are $[0,1/2)$ and $[0,(\Vert \textbf{A}\Vert+2)/2)$, respectively. For \InDCAe, $\gamma = 0.499$ is set, while for \RInDCAe\, we choose $\gamma = 0.499(\Vert \textbf{A}\Vert+2)$. 
Moreover, the common initial point for all involved algorithms is randomly generated as follows: we first use the standard normal distribution to generate a point $\textbf{y} \in \R^n$, then we obtain the initial point $\textbf{x}^0 = (\textbf{x}^0_j)$ by setting $$\textbf{x}^0_j = \frac{e^{\textbf{y}_j}}{\sum_{j=1}^{n} e^{\textbf{y}_j}}$$ for $j = 1,\cdots,n$.  We report the number of iterations, the computational time, and the  objective functions values. The bold values in Tables \ref{tab:b} and \ref{tab:c} highlight the best numerical results. The computational time presented is the average execution time (in seconds) over 10 runs. 

\begin{table}
	\centering
	\caption{Performances of DCA, \InDCAe, and \RInDCAe\ for checking the copositivity of $\textbf{H}_n$ with $n \in \{500,1000,1500,2000\}$}
	\label{tab:b}
	\begin{tabular}{c|c|c|c|c} 
		\hline
		Size (n) &
		Algorithm & It. & Time & Fval   \\
		\hline\hline
		\multirow{3}{.5in}{500}& DCA & 1963 & 0.0799 & 4.0483e-14  \\
		& \InDCAe & 1965 & 0.0817& 4.0470e-14 \\
		& \RInDCAe & \textbf{1020} & \textbf{0.0399} & \textbf{9.6531e-15} \\
		\hline
		\multirow{3}{.5in}{1000}& DCA & 2915 & 1.1634 & 1.6403e-13  \\
		& \InDCAe & 2916 & 1.1692 & 1.6449e-13 \\
		& \RInDCAe & \textbf{1562} & \textbf{0.6276} & \textbf{3.9847e-14} \\
		\hline
		\multirow{3}{.5in}{1500}& DCA & 4772 &  4.9034   & 3.7082e-13  \\
		& \InDCAe & 4773 & 4.9557 & 3.7170e-13 \\
		& \RInDCAe & \textbf{2542} & \textbf{2.5938} & \textbf{9.1359e-14} \\
		\hline
		\multirow{3}{.5in}{2000}& DCA & 5829 & 10.8607  & 6.6003e-13  \\
		& \InDCAe & 5830 & 11.1934 & 6.6094e-13 \\
		& \RInDCAe & \textbf{3129} & \textbf{6.0101} & \textbf{1.6295e-13} \\
		\hline
	\end{tabular}
\end{table}

First, we consider to check the copositivity of the Horn matrices.  All algorithms are terminated when $\Vert \textbf{x}^k-\textbf{x}^{k-1}\Vert < 10^{-9}$. Table \ref{tab:b} describes the performances of DCA, \InDCAe, and \RInDCAe\ for checking the copositivity of $\textbf{H}_n$ with $n \in \{500,1000,1500,2000\}$. It is observed that DCA and \InDCAe\ perform almost the same since the number of iterations of \InDCAe\ is very close to that of DCA. We also observe that \RInDCAe\ performs the best among the three algorithms, terminating with almost half of the number of iterations of DCA or \InDCAe, and with the smallest objective function values. The reason why \InDCAe\ loses the inertial-force effect may be that 0.499 is much smaller than $0.499(\Vert \textbf{A}\Vert+2)$ for the instance $\textbf{H}_n$ when $n$ is large.

Next, we check the non-copositivity of $\textbf{Q}_n^{\mu}$ $(\mu = 1.9)$. Here, the stopping criterion for the involved algorithms DCA, \InDCAe, and \RInDCAe\ is that  $f(\textbf{x}^{k}) \leq -10^{-6}$. Once this condition is verified, then the non-copositivity of the involved matrix is determined. Table \ref{tab:c} shows the performances of DCA, \InDCAe, and \RInDCAe\ for checking the copositivity of $\textbf{Q}_n^{\mu}$ ($\mu = 1.9$) with $n \in \{500,1000,1500,2000\}$.  
\begin{table}[h]
	\begin{center}
	\caption{Performances of DCA, \InDCAe, and \RInDCAe\ for checking the copositivity of $\textbf{Q}_n^{\mu}$ ($\mu = 1.9$) with $n \in \{500,1000,1500,2000\}$}
	
		\begin{tabular}{c|c|c|c|c} 
			\hline
			Size (n) &
			Algorithm & It. & Time & Fval   \\
			\hline\hline
			\multirow{3}{.5in}{500}& DCA & 430 & 0.0330 & -9.9882e-07  \\
			& \InDCAe & 430 & 0.0359 & -9.9139e-07 \\
			& \RInDCAe & \textbf{209} & \textbf{0.0169} & -9.8189e-07 \\
			\hline
			\multirow{3}{.5in}{1000}& DCA & 824 & 0.5786 & -9.9785e-07   \\
			& \InDCAe & 825 & 0.6014 & -9.9907e-07 \\
			& \RInDCAe & \textbf{405} & \textbf{0.2991} & -9.9934e-07 \\
			\hline
			\multirow{3}{.5in}{1500}& DCA & 2036 & 4.6078 & -9.9965e-07  \\
			& \InDCAe & 2037 & 4.6695 & -9.9975e-07 \\
			& \RInDCAe & \textbf{1021} & \textbf{2.3593} & -9.9985e-07 \\
			\hline
			\multirow{3}{.5in}{2000}& DCA & 5094 & 14.8749 & -9.9993e-07  \\
			& \InDCAe & 5095 & 15.1039 & -9.9987e-07 \\
			& \RInDCAe & \textbf{2559} & \textbf{7.7138} & -9.9977e-07 \\
			\hline
\end{tabular}
\label{tab:c}
	\end{center}
\end{table}
It is also observed that among the involved algorithms \RInDCAe\ performs the best, hitting the stopping criterion $f(\textbf{x}^{k}) \leq -10^{-6}$ with about half of the number of iterations as well as the time of DCA or \InDCAe. The behavior of the algorithms for checking the noncopositivity almost coincides with the previous experiment.

The above experiments show that the effect of heavy-ball inertial-force may be lost if the strong convexity of the first DC component is not considered.

\subsection{Image denoising problem}
Consider a gray-scale image of $m\times n$ pixels and with entries in $[0,1]$, where 0 represents pure black and 1 represents pure white.  The original image $\textbf{X}\in\R^{m\times n}$ is assumed to be corrupted with a noise $\textbf{B}$. Then, we obtain the observed image $\widetilde{\textbf{X}} = \textbf{X} +\textbf{B}$, which is known to us. We aim to recover the original image $\textbf{X}$ by solving the next problem
\begin{align}
\label{im_prob}
    \min \{\frac{\mu}{2}\Vert \bX- \widetilde{\textbf{X}} \Vert^2_{F} + \text{TV}_{\phi}(\bX):\bX \in\R^{m\times n}  \},
\end{align}
where $\mu>0$ is a trade-off constant; $\widetilde{\textbf{X}}$ is the observed image; $\Vert \cdot \Vert_F$ represents the Frobenius norm; $\phi:\R_{+} \rightarrow \R$ is a concave function and $\text{TV}_{\phi}$ is defined as:
\begin{equation*}
    \text{TV}_{\phi}(\textbf{X}) = \sum_{i,j} \phi(\Vert \nabla \bX_{i,j}\Vert)
\end{equation*}
with
\begin{equation*}
\Vert \nabla \bX_{i,j}\Vert=
\begin{cases}
	\vert \textbf{X}_{i,j}-\textbf{X}_{i,j+1}\vert& i=m,j\neq n, \\
	\vert \textbf{X}_{i,j}-\textbf{X}_{i+1,j}\vert& i\neq m,j=n, \\
	0                                  & i=m,j=n,                \\
	\sqrt{(\textbf{X}_{i,j}-\textbf{X}_{i+1,j})^2+(\textbf{X}_{i,j}-\textbf{X}_{i,j+1})^2}& \text{otherwise}.\\
\end{cases}
\end{equation*}
When $\phi \equiv 1$, we denote $\text{TV}$ as $\text{TV}_{\phi}$. The model \eqref{im_prob} with $\phi \equiv 1$ is convex. In \cite{de2019inertial}, it has shown that the convex model ($\phi \equiv 1$) is not effective to preserve edges in the restoration process of piecewise-constant images than the model \eqref{im_prob} with $\phi$ being concave, such as the $\phi$ described in Table \ref{tab:1}. Moreover it has also shown how to derive the DC decomposition for $\text{TV}_{\phi}$. Concretely, $\text{TV}_{\phi}(\textbf{X})$ has the following DC decomposition $$\text{TV}_{\phi}(\textbf{X}) = \text{TV}(\textbf{X}) - (\text{TV}(\textbf{X}) - \text{TV}_{\phi}(\textbf{X})).$$ Then, we get from \eqref{im_prob} the next two equivalent DC programs 
    \begin{equation}
        \label{im_denoise}
        \min \{\underbrace{\frac{\mu}{2}\Vert \bX- \widetilde{\textbf{X}} \Vert^2_{F} + \text{TV}(\textbf{X})}_{f_1(\textbf{X})} - (\underbrace{\text{TV}(\textbf{X}) - \text{TV}_{\phi}(\bX)}_{f_2(\textbf{X})}):\bX \in\R^{m\times n}  \},
    \end{equation}
and 
\begin{equation}
	\label{im_denoise1}
	\min \{\underbrace{\frac{\mu+\rho}{2}\Vert \bX- \widetilde{\textbf{X}} \Vert^2_{F} + \text{TV}(\textbf{X})}_{\widetilde{f_1}(\textbf{X})} - (\underbrace{\frac{\rho}{2}\Vert \bX- \widetilde{\textbf{X}} \Vert^2_{F} + \text{TV}(\textbf{X}) - \text{TV}_{\phi}(\bX)}_{\widetilde{f}_2(\textbf{X})})\},
\end{equation}
where $\rho >0$.

\begin{table}[ht]
	\caption{Some examples of concave function $\phi_{a}:\R_{+}\rightarrow \R$ parameterized by $a>0$}
	\label{tab:1}      
	\centering
	\begin{tabular}{lllll}
		\hline\noalign{\smallskip}
		& $\phi_{\log}$ & $\phi_{\text{rat}}$ & $\phi_{\text{atan}}$ & $\phi_{\exp}$ \\
		\noalign{\smallskip}\hline\noalign{\smallskip}
		$\phi_a(r)$ & $\frac{\log(1+ar)}{a}$ & $\frac{r}{1+ar/2}$& $\frac{\text{atan}((1+ar)/\sqrt{3})-\pi/6}{a\sqrt{3}/4}$ & $\frac{1-\exp(-ar)}{a}$ \\
		$\phi_{a}^{'}(r)$ & $\frac{1}{1+ar}$ & $\frac{1}{(1+ar/2)^2}$ & $\frac{4}{a^2r^2+2ar+4}$ & $\frac{1}{\exp(ar)}$\\
		\noalign{\smallskip}\hline
	\end{tabular}
\end{table}

We will compare DCA, \InDCAe, \RInDCAe, and ADCA \cite{nhat2018accelerated} (a DCA based algorithm using the Nesterov's extrapolation). Note that DCA and ADCA are applied for \eqref{im_denoise}, while \InDCAe\ and \RInDCAe\ are applied for \eqref{im_denoise1}. 
All of the involved algorithms require solving the convex subproblems in form of 
\begin{equation*}
    \min\{ \frac{\widetilde{t}}{2}\Vert \textbf{X}-\textbf{Z}\Vert_F^2 + \text{TV}(\textbf{X}):\textbf{X}\in\R^{m\times n}\},
\end{equation*}
where $\widetilde{t}>0$ and $\textbf{Z}\in\R^{m\times n}$, which can be efficiently solved by the fast gradient projection (FGP) method proposed in \cite{beck2009fast}.

We use the gray-scale image checkerboard.png ($200\times 200$ pixels) for our experiments. White noise with variance 0.1 is added to the image to produce the observed image $\widetilde{\textbf{X}}$ in both \eqref{im_denoise} and \eqref{im_denoise1}. Here, we use the same method as described in \cite{de2019inertial} to generate the corrupted image. The original image and the corrupted image are shown in Fig. \ref{fig:denoise0}. To measure the quality of reconstruction images, we use the peak signal-to-noise ratio (PSNR) and the structural similarity (SSIM) index (see \cite{monga2017handbook}). In general, the larger PSNR means the better restoration, while the closer to 1 of SSIM indicates the better reconstruction quality. For all the involved algorithms, the observed images $\widetilde{\textbf{X}}$ are set as the common initial point, whose convex subproblems are all solved by FGP with the stopping condition that the distance between two consecutive iterates is smaller than $10^{-4}$. It is also worth noting that for problems \eqref{im_denoise} and \eqref{im_denoise1}, another DCA based method with Nesterov's extrapolation developed in \cite{wen2018proximal} reduces to DCA.

\begin{figure}[htbp]
  \subfigure[Original image]{\includegraphics[width=0.44\textwidth]{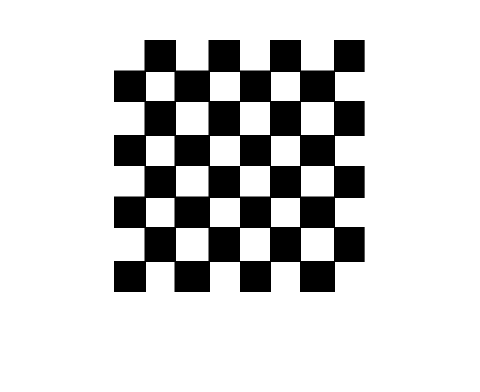}}
  \subfigure[Corrupted image]{\includegraphics[width=0.44\textwidth]{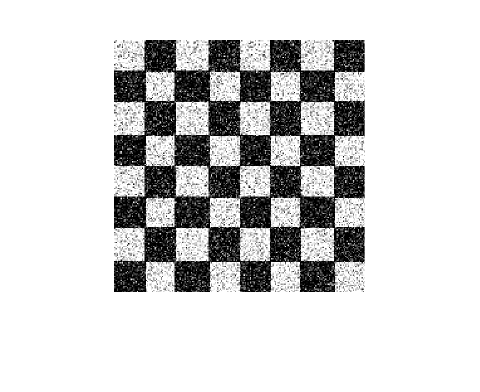}}
  \caption{Original and corrupted images}
\label{fig:denoise0}       
\end{figure}

\redtext{We test DCA and ADCA for \eqref{im_denoise}, \InDCAe\ and \RInDCAe\ for \eqref{im_denoise1}.} The parameter $\mu$ for both \eqref{im_denoise} and  \eqref{im_denoise1} ranges from 0.8 to 1.6, and we set $\phi = \phi_{\text{rat}}$ with $a=6$, moreover, the parameter $\rho$ in \eqref{im_denoise1} is fixed as 1. The inertial step-size $\gamma$ in \InDCAe\ is set as $0.5 \times \rho \times 99\% = 0.495$, and in \RInDCAe\ as $0.5\times(\mu+2\rho)\times 99\% = 0.495\mu+0.99$. Besides, the parameter $q$ in ADCA is set as 3.  In Table \ref{tab:denoise2}, we present in detail the best objective values $f_{\text{best}}$ within 100 iterations and the corresponding values of PSNR and SSIM. The trends of the objective function values together with the values of PSNR and SSIM when $\mu = 1.2$ with respect to running time (average on 10 runs) are plotted in Fig \ref{fig:denoise21}. 
Moreover, the recovered images within 100 iterations are demonstrated in Fig \ref{fig:denoise22}. Here, we observes that \RInDCAe\ always obtains the lowest objective function values, and almost always the best PSNR and SSIM. The benefit of enlarged inertial step-size is indicated by comparing \InDCAe\ and \RInDCAe. Moreover, we also observe that ADCA performs better than DCA and \InDCAe. Finally, by comparing DCA and \InDCAe, we observe that adding a strong convexity term to the original DC program \eqref{im_denoise} degrades the effectiveness of DCA-based algorithm, which can not be offset by the benefit of inertial-force. 

As a conclusion, for the image denoising problem, our method \RInDCAe\ is a promising approach, which outperforms the other tested algorithms.

\begin{table}[h!]
	\caption{Restoration of the corrupted checkerboard image}
	\label{tab:denoise2}
	\centering
	\begin{tabular}{c|c|ccc} \hline
		Algorithm & $\mu$ & $f_{\text{best}}$ & PSNR & SSIM \\
		\hline\hline
		DCA &  &   2.2920e+03	 & 24.2319 & 0.8268  \\
		\InDCAe &  0.8 &   2.2937e+03  &   24.2902 & 0.8269 \\
		\RInDCAe &  &   \textbf{2.2896e+03}	 & 24.4612 & \textbf{0.8324} \\
		ADCA &  &   2.2909e+03	 & \textbf{24.4654} & 0.8272\\
		\hline
		DCA &  &   2.4923e+03	 & 	24.1899 & 0.8324 \\
		\InDCAe &  0.9 &    2.4937e+03  &  24.1308 & 0.8307 \\
		\RInDCAe &  &   \textbf{2.4876e+03}	 & \textbf{24.3191} & \textbf{0.8363} \\
		ADCA &  &   2.4907e+03	 & 24.2289 & 0.8344\\
		\hline
		DCA &  &   2.6944e+03	 & 	23.8179 & 0.8265  \\
		\InDCAe &  1.0 &  2.6974e+03  &  23.7593 & 0.8262 \\
		\RInDCAe &  &  \textbf{2.6854e+03}	 &  \textbf{24.1334} & \textbf{0.8390} \\
		ADCA &  &   2.6914e+03	 & 23.9806 &   0.8308\\
		\hline
		DCA &  &   2.8987e+03	 & 	23.3509 &  0.8202  \\
		\InDCAe &  1.1 &   2.9017e+03  &  23.2949 & 0.8171 \\
		\RInDCAe &  &  \textbf{2.8839e+03}	 & \textbf{23.7590} & \textbf{0.8433} \\
		ADCA &  &   2.8937e+03	 & 23.4928 &  0.8272\\
		\hline
		DCA &  &   3.0961e+03	 & 	23.0492 & 0.8114  \\
		\InDCAe &  1.2 &   3.0971e+03  & 22.9270 & 0.8114 \\
		\RInDCAe &  &  \textbf{3.0811e+03}	 & \textbf{23.3767} & \textbf{0.8353} \\
		ADCA &  &   3.0927e+03	 & 23.1191 &  0.8166\\
		\hline
		DCA &  &   3.2964e+03	 & 	22.4314 & 0.7942  \\
		\InDCAe &  1.3 &   3.2997e+03  & 22.4193 & 0.7933 \\
		\RInDCAe &  & \textbf{3.2745e+03}	 & \textbf{23.0360} & \textbf{0.8279} \\
		ADCA &  &   3.2908e+03	 & 22.5914 &  0.8030\\
		\hline
		DCA &  &   3.4935e+03	 & 	21.9890 & 0.7723  \\
		\InDCAe &  1.4 &   3.4967e+03  & 21.9640 & 0.7700 \\
		\RInDCAe &  &  \textbf{3.4688e+03}	 & \textbf{22.5424} & \textbf{0.8127} \\
		ADCA &  &   3.4876e+03	 & 22.0525 &  0.7797\\
		\hline
		DCA &  &   3.6881e+03	 & 	21.5175 & 0.7517  \\
		\InDCAe &  1.5 &   3.6907e+03  & 21.5080 & 0.7487 \\
		\RInDCAe &  &  \textbf{3.6584e+03}	 & \textbf{22.1562} & \textbf{0.8004} \\
		ADCA &  &   3.6817e+03	 & 21.5928 &  0.7569\\
		\hline
		DCA &  &   3.8768e+03	 & 	21.1589 & 0.7272   \\
		\InDCAe &  1.6 &   3.8791e+03  &  21.1393 & 0.7240 \\
		\RInDCAe &  &  \textbf{3.8478e+03}	 & \textbf{21.6137} & \textbf{0.7752} \\
		ADCA &  &  3.8704e+03	 & 21.2309 &  0.7357\\
		\hline
		\multicolumn{5}{l}{(Bold values for the best numerical results)}
	\end{tabular}
\end{table}

\begin{figure}[ht!]
  \subfigure[]{\includegraphics[width=0.49\textwidth]{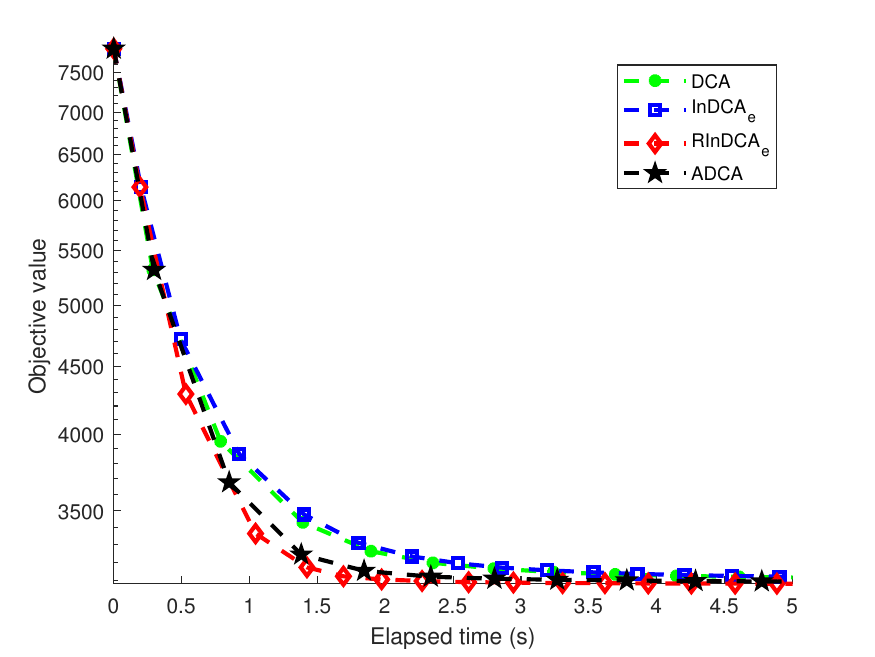}}
  \subfigure[]{\includegraphics[width=0.49\textwidth]{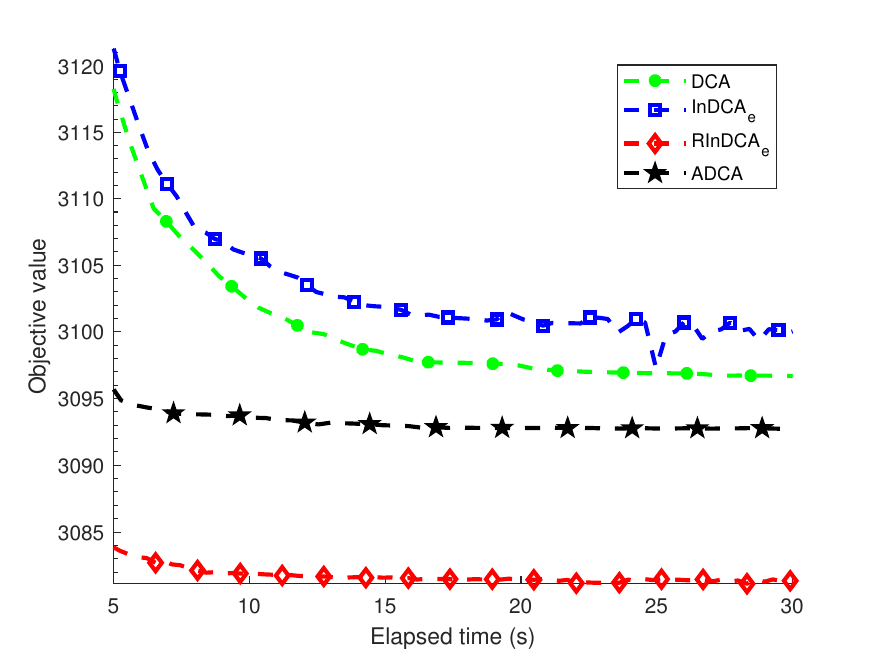}}
  \subfigure[]{\includegraphics[width=0.49\textwidth]{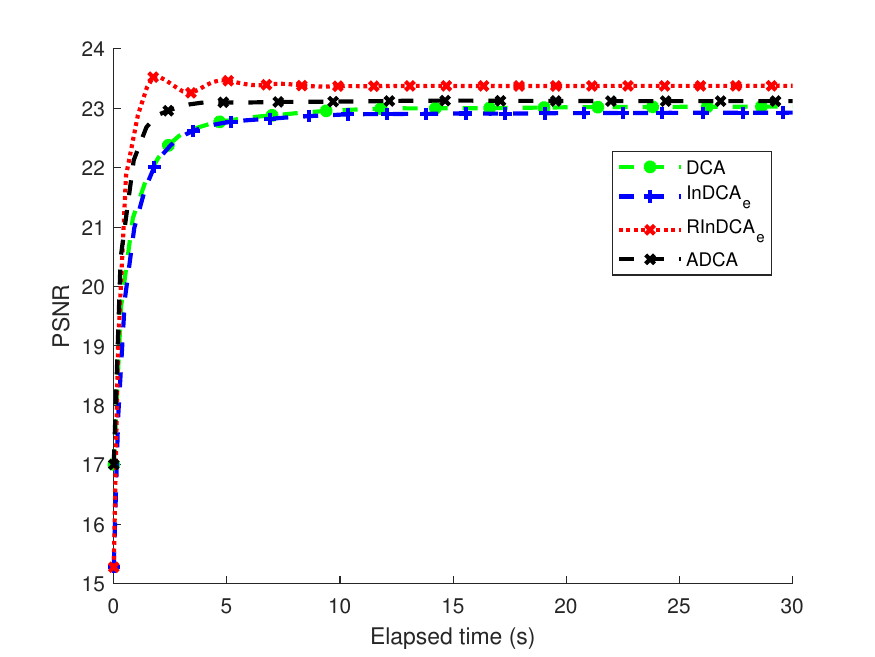}}
  \subfigure[]{\includegraphics[width=0.49\textwidth]{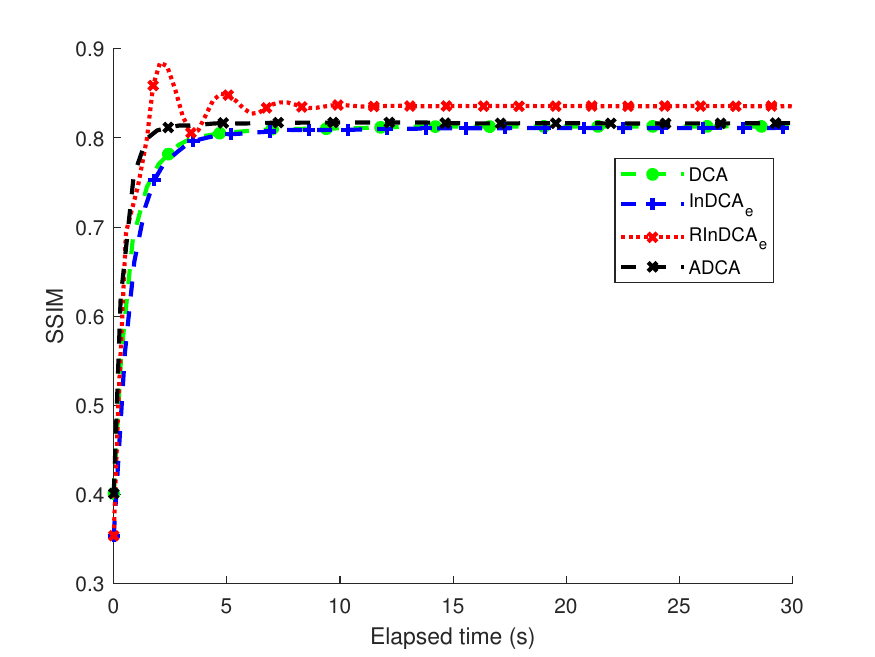}}
\caption{The trends of the objective function values together with PSNR and SSIM}
\label{fig:denoise21}      
\end{figure}

\begin{figure}[ht!]
  \subfigure[]{\includegraphics[width=0.24\textwidth]{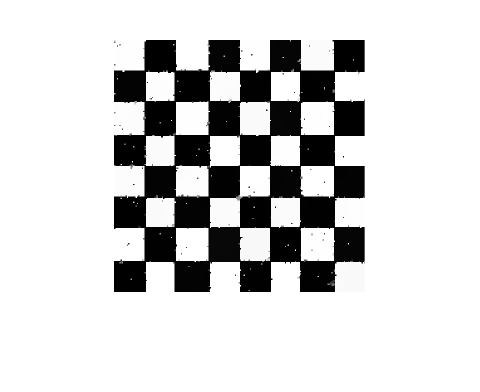}}
  \subfigure[]{\includegraphics[width=0.24\textwidth]{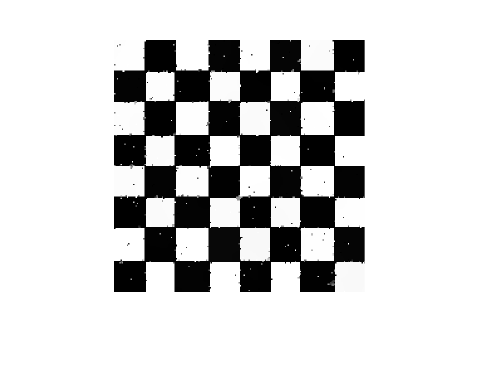}}
  \subfigure[]{\includegraphics[width=0.24\textwidth]{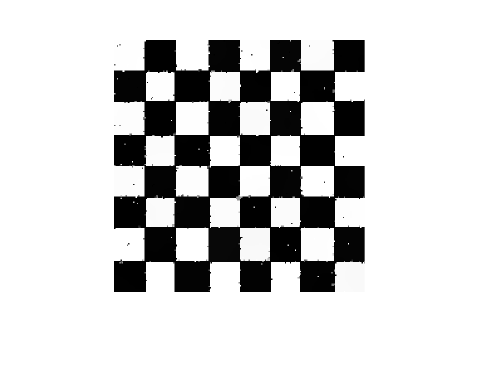}}
  \subfigure[]{\includegraphics[width=0.24\textwidth]{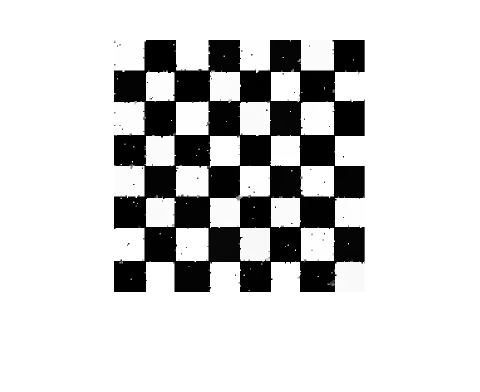}}
\caption{Image reconstructions where \textbf{a,\,b,\,c,\,d} is respectively the resulting image by DCA, \InDCAe, \RInDCAe, and ADCA} 
\label{fig:denoise22}       
\end{figure}

\section{Conclusion and perspective}
\label{sec:7}
In this paper, based on the inertial DC algorithm \cite{de2019inertial}  for DC programming, we propose a refined version with larger inertial step-size, for which we prove the subsequential convergence and the sequential convergence by further assuming the KL property. Numerical simulations on checking copositivity of matrices and image denoising problem show the good performance of our proposed methods  and the benefit of larger step-size. 
 
About the future works, the numerical test on \RInDCAine\ will be investigated. We may also introduce non-heavy-ball and non-Nesterov acceleration to DC programming.  Furthermore, the inertial-force procedure will be extended to the partial DC programming \cite{partialDC} in which the objective function $f$ depends on two variables $\vx$ and $\vy$ where $f(\vx,.)$ and $f(.,\vy)$ are both DC functions.

\begin{acknowledgements}
This work is supported by the National Natural Science Foundation of China (Grant 11601327). We thank the anonymous referees for their valuable remarks. 
\end{acknowledgements}

\bibliographystyle{spmpsci}      
\bibliography{mybib}   

\end{document}